\newtheorem{Lemma}{Lemma}
\newtheorem{Claim}{Claim}
\newtheorem{Theorem}{Theorem}
\newtheorem{Definition}{Definition}
\newtheorem{Problem}{Problem}
\newtheorem{Corollary}{Corollary}
\newtheorem{Remark}{Remark}
\numberwithin{Subcase}{Case}
\renewcommand{\Re}{\mathbb R}
\newcommand{\K}{\mathbf{K}}
\renewcommand{\S}{\mathbb{S}}
\newcommand{\KK}{\mathcal{K}}
\DeclareMathOperator{\conv}{conv}
\DeclareMathOperator{\perim}{perim}
\DeclareMathOperator{\area}{area}
\DeclareMathOperator{\bd}{bd}
\DeclareMathOperator{\inter}{int}
\DeclareMathOperator{\arclength}{arclength}
\DeclareMathOperator{\cl}{cl}
\begin{document}

\title[Dowker-type theorems]{Dowker-type theorems for disk-polygons in normed planes}

\author[B. Basit]{Bushra Basit}
\author[Z. L\'angi]{Zsolt L\'angi}

\address{Bushra Basit, Department of Algebra and Geometry, Budapest University of Technology and Economics,\\
M\H uegyetem rkp. 3., H-1111 Budapest, Hungary}
\email{bushrabasit18@gmail.com}
\address{Zsolt L\'angi, Department of Algebra and Geometry, Budapest University of Technology and Economics, and MTA-BME Morphodynamics Research Group,\\
M\H uegyetem rkp. 3., H-1111 Budapest, Hungary}
\email{zlangi@math.bme.hu}

\thanks{Partially supported by the National Research, Development and Innovation Office, NKFI, K-147544 grant.}

\subjclass[2020]{52A40, 52A21, 52A30}
\keywords{Dowker's theorem, circumscribed polygon, inscribed polygon, normed plane, spindle convexity, $C$-convexity}

\begin{abstract}
A classical result of Dowker (Bull. Amer. Math. Soc. 50: 120-122, 1944) states that for any plane convex body $K$ in the Euclidean plane, the areas of the maximum (resp. minimum) area convex $n$-gons inscribed (resp. circumscribed) in $K$ is a concave (resp. convex) sequence. It is known that this theorem remains true if we replace area by perimeter, the Euclidean plane by an arbitrary normed plane, or convex $n$-gons by disk-$n$-gons, obtained as the intersection of $n$ closed Euclidean unit disks. The aim of our paper is to investigate these problems for $C$-$n$-gons, defined as intersections of $n$ translates of the unit disk $C$ of a normed plane. In particular, we show that Dowker's theorem remains true for the areas and the perimeters of circumscribed $C$-$n$-gons, and the perimeters of inscribed $C$-$n$-gons. We also show that in the family of origin-symmetric plane convex bodies, for a typical element $C$ with respect to Hausdorff distance, Dowker's theorem for the areas of inscribed $C$-$n$-gons fails.
\end{abstract}

\maketitle

\section{Introduction}\label{sec:intro}

For any integer $n \geq 3$ and plane convex body $K$, let $A_n(K)$ (resp. $a_n(K)$) denote the the infimum (resp. supremum) of the areas of the convex $n$-gons circumscribed about (resp. inscribed in) $K$. Verifying a conjecture of Kerschner, Dowker \cite{Dowker} proved that for any plane convex body $K$, the sequences $\{ A_n(K) \}$ and $\{ a_n(K) \}$ are convex and concave, respectively. It was proved independently by L. Fejes T\'oth \cite{LFTperim}, Moln\'ar \cite{Molnar} and Eggleston \cite{Eggleston} that the same statements remain true if we replace area by perimeter, where the last author also showed that these statements are false if we replace area by Hausdorff distance. These results are known to be true also in any normed plane \cite{MSW}. Dowker's theorems have became important in many areas of discrete geometry, in particular in the theory of packing and covering \cite{LFTSzeged, Bambah, regfig} and are often used even today (see e.g. \cite{Prosanov, BL23}).

Among many variants of Dowker's theorems that have appeared in the literature, we mention only one, which is related to the notion of spindle convexity. This concept goes back to a paper of Mayer \cite{Mayer} who, for any given convex body $C$ in Euclidean space, considered sets $X$ with the property that for any points $p,q \in X$, $X$ contains the intersection of all translates of $C$ containing $p,q$. He called these sets \emph{hyperconvex}. His paper led to several papers in this topic in the 1930s and 40s, which, however, seems to have been forgotten by the end of the century. In modern times, a systematic investigation of hyperconvex sets was started in the paper \cite{BLNP} in 2007 for the special case that $C$ is a closed Euclidean ball, and a similar paper \cite{LNT2013} appeared in 2013, dealing with any convex body $C$ (see also \cite{JMR}). Some aspects of hyperconvex sets, namely those related to the Carath\'eodory and the Krein-Milman Theorem, are studied in \cite{Pol} in the case that $C$ is a Euclidean ball, and in \cite{BalPol} in general.
Hyperconvex sets have appeared in the literature under several different names: spindle convex, strongly convex or superconvex sets (see e.g. \cite{MM22, Vincensini}), and appear in different areas of mathematics \cite{ChDT, HSTV, MM22}. In this paper, we follow the terminology in \cite{BLNP, LNT2013}, and call a set satisfying the property in Mayer's paper \emph{$C$-spindle convex}, or shortly \emph{$C$-convex}, and if $C$ is a closed Euclidean unit ball, we call it spindle convex (see Definition~\ref{defn:C-spindle}).

One of the results related to spindle convex sets is due to G. Fejes T\'oth and Fodor \cite{TF2015} who extended Dowker's theorems, together with their variants for perimeter, for spindle convex sets; in these theorems the role of inscribed or circumscribed convex $n$-gons is played by the so-called \emph{disk-$n$-gons}, obtained as the intersections of $n$ closed Euclidean unit disks. They also proved similar theorems in hyperbolic and spherical plane, apart from the case of the perimeter of circumscribed polygons on the sphere, which is still open.

Our main goal is to investigate a normed version of the problem in \cite{TF2015}, by replacing the Euclidean unit disks by the unit disks of a normed plane, and measuring area and perimeter according to the norm. To state our results, recall that the unit ball of any finite dimensional normed space is a convex body symmetric to the origin $o$, and any such body is the unit ball of a finite dimensional normed space. Thus, in the paper we choose an arbitrary $o$-symmetric convex disk $C$ in the real normed space $\Re^2$, and work in the normed plane with unit disk $C$, which we regard as $\Re^2$ equipped with the norm $||\cdot||_C$ of $C$.
In the paper, by a convex disk we mean a compact, convex planar set with nonempty interior. We denote the family of convex disks by $\KK$, and the family of $o$-symmetric convex disks by $\KK_o$. In the paper we regard $\KK$ and $\KK_o$ as topological spaces with the topology induced by Hausdorff distance.

Before presenting our results, recall the well-known fact that any finite dimensional real normed space can be equipped with a Haar measure, and that this measure is unique up to multiplication of the standard Lebesgue measure by a scalar (cf. e.g. \cite{Thompson}). This scalar does not play a role in our investigation and in the paper $\area(\cdot)$ denotes $2$-dimensional Lebesgue measure. 

\begin{Definition}\label{defn:Cperim}
For any $C \in \KK_o$ and convex polygon $Q$, we define the $C$-perimeter of $Q$ as the sum of the lengths of the sides of $Q$, measured in the norm generated by $C$. The  \emph{$C$-perimeter} of a convex disk $K \subset \Re^2$, denoted by $\perim_C(K)$, is the supremum of the $C$-perimeters of all convex polygons inscribed in $K$.
\end{Definition}

We note that, moving its vertices one by one to the boundary of $K$ in a suitable direction, for any convex polygon $Q$ contained in $K$ one can find a convex polygon $Q'$ inscribed in $K$ with $\perim_C(Q) \leq \perim_C(Q')$. This shows, in particular, that for any two plane convex bodies $K \subseteq L \subset \Re^2$, we have $\perim_C(K) \leq \perim_C(L)$, with equality if and only if $K=L$ (see \cite{MSW}, and for this argument, also \cite[Theorem 5B]{Schaffer}).
Furthermore, it is worth observing that a straightforward modification of Definition~\ref{defn:Cperim} can be used to define the \emph{$C$-length} of a rectifiable curve $\Gamma \subset \Re^2$, denoted by $\arclength_C(\Gamma)$.

Our next definition can be found in \cite{LNT2013} and its origin goes back to \cite{Mayer}.

\begin{Definition}\label{defn:C-spindle}
Let $C \in \KK_o$ and consider two (not necessarily distinct) points $p, q \in \Re^2$ such that a translate of $C$ contains both $p$ and $q$.
Then the \emph{$C$-spindle} (denoted as $[p,q]_C$) of $p$ and $q$ is the intersection of all translates of $C$
that contain $p$ and $q$. If no translate of $C$ contains $p$ and $q$, we set $[p,q]_C = \Re^2$.
We call a set $K \subset \Re^2$ \emph{$C$-spindle convex} (or shortly \emph{$C$-convex}), if for any $p,q \in K$, we have $[p,q]_C \subseteq K$.
\end{Definition}

We recall from \cite[Corollary 3.13]{LNT2013} that a closed set in $\Re^2$ different from $\Re^2$ is $C$-convex if and only if it is the intersection of some translates of $C$.

\begin{Definition}\label{defn:Cpolygon}
The intersection of $n$ translates of $C$ is called a \emph{$C$-$n$-gon} for $n \geq 3$.
\end{Definition}

In our next definition and throughout the paper, $\area(\cdot)$ denotes standard Lebesgue measure.

\begin{Definition}\label{defn:areper}
Let $n \geq 3$ and let $K$ be a $C$-convex disk in $\Re^2$, where $C \in \KK_o$. We set
\begin{equation}\label{eq:Cconv_infsup}
\begin{aligned}
\hat{A}_n^C(K) = & \inf \{ \area(Q) : Q \hbox{ is a } C-n-\hbox{gon circumscribed about } K \};\\
\hat{a}_n^C(K) = & \sup \{ \area(Q) : Q \hbox{ is a } C-n-\hbox{gon inscribed in } K \};\\
\hat{P}_n^C(K) = & \inf \{ \perim_C(Q) : Q \hbox{ is a } C-n-\hbox{gon circumscribed about } K \};\\
\hat{p}_n^C(K) = & \sup \{ \perim_C(Q) : Q \hbox{ is a } C-n-\hbox{gon inscribed in } K \}.
\end{aligned}
\end{equation}
\end{Definition}

The next theorem, together with Theorem~\ref{thm:counter1}, is the main result of the paper. We note that other results related to the approximation of $C$-convex disks by $C$-$n$-gons can be found also in \cite{Fodor, NV}.

\begin{Theorem}\label{thm:Dowker1}
For any $C \in \KK_o$ and $C$-convex disk $K$, the sequences $\{ \hat{A}_n^C(K) \}$, $\{ \hat{P}_n^C(K) \}$ are convex, and the sequence $\{ \hat{p}_n^C(K) \}$ is concave. That is, for any $n \geq 4$, we have
\[
\hat{A}_{n-1}^C(K)+\hat{A}_{n+1}^C(K) \geq 2 \hat{A}_n^C(K), \hat{P}_{n-1}^C(K)+\hat{P}_{n+1}^C(K) \geq 2 \hat{P}_n^C(K), \hbox{ and}
\]
\[
\hat{p}_{n-1}^C(K)+\hat{p}_{n+1}^C(K) \leq 2 \hat{p}_n^C(K).
\]
\end{Theorem}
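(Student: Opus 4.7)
The plan is to adapt the classical Dowker--Moln\'ar--Eggleston combinatorial strategy, as used by G.~Fejes T\'oth and Fodor~\cite{TF2015} in the spindle-convex setting, to the present $C$-convex framework. The three claims are to be proved in parallel: for each, I would start from optimizers of the corresponding functionals for $(n-1)$-gons and $(n+1)$-gons and construct two $C$-$n$-gons whose total area or $C$-perimeter bounds the sum of the extremes from the favorable side.

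First I would establish that the infima and suprema in Definition~\ref{defn:areper} are attained. By the representation of $C$-convex sets as intersections of translates of $C$ (\cite[Corollary~3.13]{LNT2013}), an inscribed $C$-$n$-gon in $K$ is determined by its $n$ cyclically ordered vertices on $\partial K$, and a circumscribed $C$-$n$-gon by an $n$-tuple of translates of $C$ containing $K$ and supporting it. In each case the parameter space is compact, and both area and $C$-perimeter are Hausdorff-continuous, so the extrema are realized. I would also record the monotonicity statements that follow from these parametrizations: inserting an additional vertex (on $\partial K$, between two consecutive vertices) into an inscribed $C$-$n$-gon only enlarges it as a set, hence only increases its $C$-perimeter; dually, inserting an additional circumscribing translate of $C$ into a circumscribed $C$-$n$-gon only shrinks it, hence only decreases its area and $C$-perimeter.

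For the concavity of $\{\hat{p}_n^C(K)\}$, let $P_{n-1}$ and $P_{n+1}$ realize the two relevant suprema, and cyclically order on $\partial K$ the $2n$ vertices of these two polygons (assuming distinctness; coincidences are to be resolved by a limiting argument). The goal is to form two new inscribed $C$-$n$-gons $R_1, R_2$ by a pairwise exchange of vertices so that
\[
\perim_C(R_1) + \perim_C(R_2) \;\geq\; \perim_C(P_{n-1}) + \perim_C(P_{n+1}),
\]
which immediately yields the Dowker inequality. The exchange rests on the following local identity: for three consecutive points $a,b,c$ on $\partial K$, reassigning the middle vertex $b$ from one polygon to the other replaces a pair of $C$-arcs $(\gamma_{ab}, \gamma_{bc})$ by the single $C$-arc $\gamma_{ac}$ in one polygon and inserts $b$ as a new vertex in the other; the sum of $C$-arclength contributions is then controlled via the monotonicity noted above. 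For the two convexity claims $\{\hat{A}_n^C(K)\}$ and $\{\hat{P}_n^C(K)\}$, the same scheme applies with vertices replaced by supporting translates of $C$, the role of $C$-arclength being played either by the area of the ``cap'' cut off from $K$ by a supporting $C$-arc or by its $C$-perimeter contribution.

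The main obstacle is the verification of the pairwise exchange inequality in the $C$-convex setting, where sides are $C$-arcs rather than line segments. For the perimeter claims, the inequality reduces to the additivity of $C$-arclength along a single $C$-arc combined with the monotonicity statement; for the circumscribed area claim, a more delicate convexity/monotonicity argument is required, comparing the areas of the cap regions between $K$ and a supporting translate of $C$ as the support varies along $\partial K$. Notably, the same strategy is expected to break down for the inscribed area functional $\hat{a}_n^C(K)$ (the case conspicuously absent from the theorem): the analogous local inequality, a concavity statement for the area of a $C$-triangle as a function of one of its vertices, is sensitive to the curvature of $\partial C$ and should not hold for typical $C \in \KK_o$, consistent with the counterexample announced in the abstract. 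Isolating the precise hypothesis that secures the exchange inequality in each of the three remaining cases is the principal technical task.
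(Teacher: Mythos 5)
Your high-level plan is the same as the paper's: the paper abstracts the Dowker--Moln\'ar--Eggleston uncrossing argument into two functional lemmas (Lemmas~\ref{lem:functional_1} and \ref{lem:functional_2}) about tilings of $\S^1$ by arcs, and then verifies, for each of the three functionals, the four-point exchange inequality that these lemmas require. However, your proposal has a genuine gap precisely at the step you yourself flag as ``the principal technical task'': the local exchange inequality is misidentified for the perimeter case and not supplied at all for the circumscribed cases. The correct local step is not a three-point reassignment of a middle vertex $b$ (removing $b$ from one polygon and inserting it into the other changes the two perimeters in opposite directions, and monotonicity alone gives no control on the \emph{sum} of the two changes). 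What is needed is the four-point inequality for $p,q,r,s$ in $C$-convex position, $\rho_C(p,q)+\rho_C(r,s)\leq\rho_C(p,r)+\rho_C(q,s)$ (the paper's Lemma~\ref{lem:quadrangle}), applied to a \emph{covering} pair of arcs in the union of the two vertex sets viewed as a $2$-tiling of $\S^1$; its proof uses the reverse triangle inequality for $\rho_C$ (Lemma~\ref{lem:triangle_inequality}(c)), which is only valid for \emph{smooth} $C$. This points to a second omission: the reduction, via the closedness of the families $\KK^n_a,\KK^n_p,\KK^n_A,\KK^n_P$ (Lemma~\ref{lem:closed}), to the case where $C$ and $K$ are $C^\infty$ with controlled curvatures. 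Without that reduction the normal-angle parametrization of $\bd K$ by supporting translates of $C$ is not well defined, and the exchange inequalities you need are not available for general $C\in\KK_o$.

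For the two circumscribed functionals your proposal only names the difficulty. In the paper these are handled by genuinely different verifications: for $\hat{A}^C_n(K)$ one checks a \emph{pointwise} inequality of indicator functions of the cap regions $R(\varphi_1,\varphi_2)$ cut off between $K$ and pairs of supporting translates of $C$, which immediately gives the submodularity needed for Lemma~\ref{lem:functional_2}; for $\hat{P}^C_n(K)$ one instead uses a differential criterion (Lemma~\ref{lem:derivatives}) reducing the exchange inequality to $\partial_{\varphi_1}\partial_{\varphi_2}P\leq 0$, proved by a tangent-vector and translation argument (Lemma~\ref{lem:translation}) together with the monotonicity of $\perim_C$ under inclusion. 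Neither of these follows from ``additivity of $C$-arclength plus monotonicity'' as sketched. Your observation about why $\hat{a}^C_n(K)$ is excluded is correct and consistent with Theorem~\ref{thm:counter1}, but as it stands the proposal establishes none of the three asserted inequalities.
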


As a consequence of Theorem~\ref{thm:Dowker1}, we prove Theorem~\ref{thm:Dowker2}, and recall that similar statements have been derived in \cite{GFTandLFT} for the Euclidean areas of inscribed and circumscribed polygons from the classical results of Dowker in \cite{Dowker} (for their spindle convex variants, see \cite{TF2015}).
Before stating it, we recall that a plane convex body $K$ has a $k$-fold rotational symmetry with respect to the point $p$ if the rotation around $p$ with angle $\frac{2\pi}{k}$ in counterclockwise direction is a symmetry of $K$.

\begin{Theorem}\label{thm:Dowker2}
Let $n \geq 3$ and $k \geq 2$. Assume that $k$ is a divisor of $n$ and both $K$ and $C$ have $k$-fold rotational symmetry. Then there are $C$-$n$-gons $Q^A$, $Q^P$ circumscribed about $K$ which have $k$-fold rotational symmetry, and $\area(Q^A)= \hat{A}_n^C(K)$ and $\perim_C(Q^P)= \hat{P}_n^C(K)$. Similarly, there is a $C$-$n$-gon $Q^p$ inscribed in $K$ which has $k$-fold rotational symmetry, and $\perim_C(Q^p)= \hat{p}_n^C(K)$.
\end{Theorem}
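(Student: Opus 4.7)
The plan is to follow the strategy of Fejes Tóth and Fejes Tóth \cite{GFTandLFT}, deducing Theorem~\ref{thm:Dowker2} from Theorem~\ref{thm:Dowker1} by combining an existence argument via compactness with a rotational averaging.

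First I would verify that the extrema in Definition~\ref{defn:areper} are attained: the families of $C$-$n$-gons circumscribed about (resp.\ inscribed in) $K$ are compact in the Hausdorff topology by Blaschke selection, and $\area$ and $\perim_C$ are continuous on them. Let $\sigma$ denote rotation by $2\pi/k$ about the centre $c$ of $k$-fold symmetry of $K$, and write $\sigma(x) = \rho(x - c) + c$, with $\rho$ the linear rotation by $2\pi/k$ about $\o$. Since $\rho C = C$ by the $k$-fold symmetry of $C$, we have $\sigma(C + t) = C + (\rho t + c - \rho c)$, so $\sigma$ sends translates of $C$ to translates of $C$ and hence $C$-$n$-gons to $C$-$n$-gons. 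As an isometry of $||\cdot||_C$, it preserves both $\area$ and $\perim_C$, and together with $\sigma K = K$ this implies that for any optimizer $Q$ each $\sigma^j Q$, $j = 0, \ldots, k-1$, is again an optimizer.

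For the circumscribed extrema, the candidate is the symmetrization $\widetilde Q := \bigcap_{j=0}^{k-1} \sigma^j Q$: a $\sigma$-invariant $C$-convex set circumscribed about $K$, a $C$-$n'$-gon with $n' \le kn$ and $k \mid n'$, satisfying $\area(\widetilde Q) \le \hat A_n^C(K)$ and $\perim_C(\widetilde Q) \le \hat P_n^C(K)$. Dually, for $\hat p_n^C(K)$, starting from an optimizer $P$ one takes the $C$-convex hull $\widetilde P$ of $\bigcup_j \sigma^j P$: a $k$-fold symmetric inscribed $C$-$n'$-gon with $\perim_C(\widetilde P) \ge \hat p_n^C(K)$ by the monotonicity of $\perim_C$. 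In the case $n' \le n$, adjoining redundant defining translates (resp.\ coincident vertices) lets us view $\widetilde Q$ (resp.\ $\widetilde P$) as a $C$-$n$-gon, and extremality immediately yields the conclusion.

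The main obstacle is the case $n' > n$: one must delete $(n' - n)/k$ complete $\sigma$-orbits of sides (or vertices) without destroying the extremum. Following \cite{GFTandLFT}, each orbit-deletion is analysed as $k$ simultaneous single-side (or single-vertex) exchanges, and its effect on $\area$ (resp.\ $\perim_C$) is controlled by $k$ applications of the inequality in Theorem~\ref{thm:Dowker1}. Telescoping these estimates over successive reductions $n' \to n' - k \to \cdots \to n$, with the orbit chosen at each step so as to saturate the Dowker inequality, produces a $k$-fold symmetric $C$-$n$-gon still realizing $\hat A_n^C(K)$ (resp.\ $\hat P_n^C(K)$, $\hat p_n^C(K)$), thereby providing the desired $Q^A$, $Q^P$, $Q^p$.
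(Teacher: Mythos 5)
Your overall plan---average an optimizer over the rotation group and then repair the number of sides---starts in the right place, but the repair step is where all the difficulty lives, and your treatment of it does not work. After symmetrizing you hold a single $\sigma$-invariant circumscribed $C$-$n'$-gon $\widetilde Q$ (resp.\ inscribed $\widetilde P$) with $n'$ possibly as large as $kn$, and you propose to delete $(n'-n)/k$ orbits of sides (vertices) ``controlled by $k$ applications of the inequality in Theorem~\ref{thm:Dowker1}.'' But Theorem~\ref{thm:Dowker1} is a statement about the extremal values $\hat{A}_m^C(K)$, $\hat{P}_m^C(K)$, $\hat{p}_m^C(K)$; it gives no control over how much the area or $C$-perimeter of a \emph{specific}, generally non-optimal, polygon changes when a \emph{specific} side or vertex is removed. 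Deleting defining translates from $\widetilde Q$ only increases its area, and the total increase you can afford is $\hat{A}_n^C(K)-\area(\widetilde Q)$; asserting that some symmetric sub-collection of $n$ translates stays within this budget is essentially asserting the existence of a symmetric optimal circumscribed $C$-$n$-gon, i.e.\ the theorem itself, so the argument is circular as stated, and no mechanism is offered for why an orbit ``saturating the Dowker inequality'' should exist. (A smaller issue: the $\sigma$-orbit of the translate centred at the centre of rotation has length $1$, so your claim $k\mid n'$ can fail.)

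The paper avoids this trap by never collapsing the $k$ rotated copies into one object. After reducing to smooth $C$ and $K$ via Lemma~\ref{lem:closed} and Corollary~\ref{cor:closed} and parametrizing $\bd(K)$ by outer normals, an optimal $C$-$n$-gon becomes an $n$-element tiling $X$ of $\S^1$, and the union $Y$ of the $k$ rotated copies of $X$ is kept as a $k$-fold tiling with exactly $kn$ arcs. The uncrossing step in Lemmas~\ref{lem:functional_1} and \ref{lem:functional_2} (replacing a covering pair $\widehat{v_iv_j}$, $\widehat{v_kv_l}$ by $\widehat{v_iv_l}$, $\widehat{v_kv_j}$) preserves the number of arcs and does not worsen the total value of $f$; once no covering pairs remain, the $k$-fold tiling splits canonically into $k$ interlaced $n$-element tilings, each automatically $k$-fold symmetric and each forced to be optimal. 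The engine is the quadrilateral inequality comparing $f(\widehat{x_1x_3})+f(\widehat{x_2x_4})$ with $f(\widehat{x_1x_4})+f(\widehat{x_2x_3})$, verified for the three functionals in Subsections 3.1--3.3 via the indicator-function computation, Lemma~\ref{lem:quadrangle} and Lemma~\ref{lem:derivatives}---that is, the same hypothesis that yields Theorem~\ref{thm:Dowker1}, not its conclusion. This superposition-and-resplitting scheme (which is also what \cite{GFTandLFT} actually carries out) is what you should substitute for the symmetrize-and-delete scheme; your compactness and equivariance observations in the first two paragraphs can be kept.
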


Before our next theorem, we remark that in a topological space $\mathcal{F}$, a subset is called \emph{residual} if it is a countable intersection of sets each of which has dense interior in $\mathcal{F}$. The elements of a residual subset of $\mathcal{F}$ are called \emph{typical}.
Based on the results in \cite{Dowker, TF2015, MSW} mentioned in the previous part of the paper, one would expect that for any $C \in \KK_o$ and $C$-convex disk $K$, the sequence $\{ \hat{a}_n^C(K) \}$ is concave. Our next result shows that this property fails for the sequence $\{ a_n^C(K) \}$ for a \emph{typical} element of $\KK_o$.

\begin{Theorem}\label{thm:counter1}
A typical element $C$ of $\KK_o$ satisfies the property that for every $n \geq 4$, there is a $C$-convex disk $K$ with
\[
\hat{a}_{n-1}^C(K) + \hat{a}_{n+1}^C(K) > 2 \hat{a}_n^C(K).
\]
\end{Theorem}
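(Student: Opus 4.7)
The plan is to argue by Baire category. For every integer $n \geq 4$, set
\[
F_n = \{ C \in \KK_o : \exists \text{ a } C\text{-convex disk } K \text{ with } \hat{a}_{n-1}^C(K) + \hat{a}_{n+1}^C(K) > 2 \hat{a}_n^C(K) \}.
\]
The theorem asserts that a typical $C \in \KK_o$ belongs to $\bigcap_{n \geq 4} F_n$, so it suffices to prove that each $F_n$ contains a dense open subset of $\KK_o$; the countable intersection of such sets is then residual by definition.

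For the openness step, suppose $(C_0, K_0)$ witnesses $C_0 \in F_n$. Since $\hat{a}_m^{C_0}(\cdot)$ is continuous in the Hausdorff metric on $C_0$-convex disks, we may first approximate $K_0$ by a $C_0$-polygon of the form $\bigcap_{i=1}^N (C_0 + t_i)$, and so assume $K_0$ itself has this form. For any $C$ sufficiently Hausdorff-close to $C_0$, the set $K(C) = \bigcap_{i=1}^N (C + t_i)$ is automatically $C$-convex and is Hausdorff-close to $K_0$. The functional $(C, K) \mapsto \hat{a}_m^C(K)$ is jointly continuous: any maximizing sequence of inscribed $C$-$m$-gons admits a Blaschke-convergent subsequence whose limit remains an inscribed $C$-$m$-gon in the limiting $C$-convex disk. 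Hence the strict inequality persists in a Hausdorff neighborhood of $C_0$, so this neighborhood lies in $F_n$.

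Density is the heart of the argument. Given arbitrary $C_* \in \KK_o$ and $\epsilon > 0$, one must construct $C \in F_n$ within Hausdorff distance $\epsilon$ of $C_*$, together with a witnessing $K$. The idea is to take $C$ to be a smooth, strictly convex perturbation of $C_*$ whose boundary contains a very short arc of anomalously high curvature near a pair of antipodal directions $\pm u$; off this arc, $\partial C$ is nearly flat. For such $C$ one builds $K$ as the intersection of $n+1$ translates of $C$ placed symmetrically with respect to $\pm u$, arranged so that the optimal inscribed $C$-$(n+1)$-gon coincides with $K$, while the optimal inscribed $C$-$n$-gon is forced to amputate one full high-curvature cap of $\partial K$. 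By contrast, the marginal area gained in passing from an inscribed $C$-$(n-1)$-gon to an inscribed $C$-$n$-gon lies in the low-curvature regime of $\partial K$ and is therefore strictly smaller. A direct geometric computation, exploiting the freedom to tune the amplitude and width of the anomalous arc of $\partial C$, then yields the strict inequality.

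The main obstacle is exactly this density step: one needs an explicit pair $(C, K)$ in every Hausdorff neighborhood of every element of $\KK_o$, and the required asymmetric area-loss estimate — quantifying precisely how much area one forfeits by dropping a single defining translate from $K$ as a function of the curvature of the removed arc — is where the classical Dowker rearrangement argument (which succeeds for perimeters in the present setting, and for Euclidean areas in the classical one) genuinely fails for $C$-areas. Exploiting this failure in a quantitatively controlled way is the technical core of the proof.
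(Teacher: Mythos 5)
Your overall skeleton matches the paper's: both arguments are Baire-category arguments in which one shows that the set of ``bad'' unit disks $C$ (those admitting a witness $K$ for the reversed inequality at level $n$) is open and dense in $\KK_o$, and then intersects over $n$. Your openness step is essentially the paper's Lemma~\ref{lem:closed}, though you gloss over one point the paper treats explicitly: continuity of $C$-spindles (and hence of $\hat{a}_m^C$) can fail when $K$ contains points at $C$-distance exactly $2$ (see Remark~\ref{rem:continuity} and the parallelogram example after it). The paper disposes of this case by noting that such a $K$ is itself a $C$-spindle, so all $\hat{a}_j^C(K)$ coincide and the strict inequality cannot hold there; you should add this observation, but it is a repairable omission.

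The genuine gap is the density step, which you yourself identify as ``the heart of the argument'' and then do not carry out: you describe a candidate construction and explicitly concede that the quantitative area-loss estimate it requires is missing. So as written this is a proof outline, not a proof. Moreover, your proposed mechanism (a smooth, strictly convex $C$ with a short arc of anomalously high curvature, $K$ an intersection of $n+1$ translates) is quite different from the paper's and is not obviously realizable. The paper's construction is resolutely polygonal: $C$ is built from a $14$-gon with several sides longer than the diameter of $K$, which forces identities such as $[p_1,p_6]_{C}=H$ (a $C$-spindle of just \emph{two} points already swallowing a whole hexagon) and makes the optimal inscribed $C$-$j$-gons explicitly computable by a short case analysis. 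It is precisely the long flat sides that pin down which vertices an optimal inscribed $C$-$n$-gon must use and produce the large one-step area drop; it is unclear that a strictly convex $C$ can reproduce this control. Finally, the paper's route to density is a grafting argument you do not have: the reversed inequality is shown to depend only on two boundary arcs $C_n^{\pm}$ of the constructed disk (Remark~\ref{rem:count1_core}), and these arcs are shrunk by a diagonal linear map $h_{\lambda,\mu}$ and implanted into the boundary of an \emph{arbitrary} $C_*\in\KK_o$ within Hausdorff distance $\varepsilon$. Without some such localization-and-transplant device, producing a witness in every neighborhood of every $C_*$ does not follow from exhibiting a single bad pair $(C,K)$.
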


\begin{Remark}\label{rem:normed1}
For $C \in \KK_o$, $K \in \KK$ and positive integer $n \geq 3$, let
\begin{equation}\label{eq:Cper_cr}
\bar{P}_n^C(K) = \inf \{ \perim_C(Q) : Q \hbox{ is a convex } n- \hbox{gon circumscribed about } K \};
\end{equation}
\begin{equation}\label{eq:Cper_ir}
\bar{p}_n^C(K) = \sup \{ \perim_C(Q) : Q \hbox{ is a convex } n- \hbox{gon inscribed in } K \}.
\end{equation}
As we have observed, it is known \cite{MSW} that for any $C \in \KK_o$ and $K \in \KK$, the sequences $\{ \bar{P}_n^C(K) \}$ and $\{ \bar{p}_n^C(K) \}$ are convex and concave, respectively. Our approach yields a new proof of these statements by applying Theorem~\ref{thm:Dowker1} for $\lambda C$, where $\lambda \to \infty$.
\end{Remark}

Applying Theorem~\ref{thm:Dowker2} for $\lambda C$ with $\lambda \to \infty$, we obtain the following.

\begin{Remark}\label{rem:normed2}
Let $C \in \KK_o$, $K \in \KK$ and $n \geq 3$. If, for some positive integer $k$, 
Let $C \in \KK_o$, $K \in \KK$, $n \geq 3$ and $k \geq 2$. Assume that $k$ is a divisor of $n$ and both $K$ and $C$ have $k$-fold rotational symmetry. Then there is a convex $n$-gon $Q^P$ circumscribed about $K$ with $\perim_C(Q^P)= \bar{P}_n^C(K)$ such that $Q^P$ has $k$-fold rotational symmetry. Similarly, there is a convex $n$-gon $Q^p$ inscribed in $K$ which has $k$-fold rotational symmetry, and $\perim_C(Q^p)= \bar{p}_n^C(K)$.
\end{Remark}

The structure of the paper is as follows. In Section~\ref{sec:prelim}, we present the necessary notation and prove some lemmas. Then in Sections~\ref{sec:proofs} and \ref{sec:counter1} we prove Theorems~\ref{thm:Dowker1} and \ref{thm:Dowker2}, and Theorem~\ref{thm:counter1}, respectively. Finally, in Section~\ref{sec:remarks}, we 
propose some open problems.

\section{Preliminaries}\label{sec:prelim}

In Subsection~\ref{subsec:prelim1} we collect the necessary properties of $C$-convex disks in general. In Subsection~\ref{subsec:prelim2} we investigate the properties of the quantities in Definition~\ref{defn:areper}, and the lemmas closely related to the proofs of Theorems~\ref{thm:Dowker1}-\ref{thm:counter1}.

\subsection{General properties of $C$-convex disks}\label{subsec:prelim1}

In the paper, for simplicity, for any $x,y \in \Re^2$, we denote by $[x,y]$ the closed segment with endpoints $x,y$. We equip $\Re^2$ also with a Euclidean norm, which we denote by $||\cdot||$, and use the notation $B^2$ for the Euclidean closed unit disk centered at $o$. Recall that the \emph{Euclidean diameter} of a compact set $X \subset \Re^2$ is the Euclidean distance of a farthest pair of points in $X$. If we replace Euclidean distance by distance measured in the norm of $C$, we obtain the \emph{$C$-diameter} of $X$.

Recall that for any set $X \subseteq \Re^2$, the \emph{$C$-convex hull}, or shortly \emph{$C$-hull} is the intersection of all $C$-convex sets that contain $X$. We denote it by $\conv_C(X)$, and note that it is $C$-convex, and if $X$ is compact, then it coincides with the intersection of all translates of $C$ containing $X$ \cite{LNT2013}.

\begin{Remark}\label{rem:summand}
It is worth noting that the property whether a convex disk is $C$-convex or not can also be phrased in terms of Minkowski sums. Indeed, following \cite{Schneiderbook}, we call a compact, convex set $K$ a \emph{summand} of a compact convex set $L$ if there is a compact, convex set $M$ with $K+M = L$. Then, by \cite[Theorem 3.2.2]{Schneiderbook}, a convex disk $K$ is $C$-convex if and only if $K$ is a summand of $C$.
\end{Remark}

In the following list we collect some elementary properties of $C$-spindles and $C$-$n$-gons that we are going to use frequently in the paper. 

\begin{Lemma}\label{lem:Cgons}
We have the following.
\begin{itemize}
\item[(a)] For any $x, y \in \Re^2$, $[x,y]_C=[x,y]$ if and only if a translate of $C$ contains $[x,y]$ in its boundary.
\item[(b)] If $[x,y]_C \neq [x,y]$, then $[x,y]_C$ is a centrally symmetric convex disk whose boundary consists of two arcs, connecting $x$ and $y$, that are contained in the boundary of some translates of $C$.
\item[(c)] For any $x,y \in \Re^2$ with $||x-y||_C \leq 2$, $[x,y]_C$ is the intersection of at most two translates of $C$, and if $[x,y]_C$ is a translate of $C$, then $||x-y||_C=2$.
\item[(d)] Conversely, a nonempty intersection of at most two translates of $C$ is the $C$-spindle of two (not necessarily distinct) points.
\item[(e)] A set is a $C$-$n$-gon if and only if it is the $C$-hull of at most $n$ points contained in a translate of $C$.
\end{itemize}
\end{Lemma}

\begin{proof}
Clearly, to prove (a)-(c), we can assume that $x \neq y$. Let $L_1$ and $L_2$ be the two supporting lines of $C$ parallel to $[x,y]$. Then the segments $L_1 \cap C$ and $L_2 \cap C$ have the same $C$-length $l \geq 0$. If $l \geq ||x-y||_C$, then $[x,y]_C = [x,y]$. Consider the case that $l < ||x-y||_C < 2$. Then there are exactly two chords $X_1, X_2$ of $C$ which are translates of $[x,y]$. For $\{ i,j \} = \{ 1,2 \}$, the chord $X_i$ dissects $C$ into two convex disks exactly one of which contains $X_j$. Let us denote the part not containing $X_j$ by $C_i$. Note that for any $x \in \Re^2$, the property that $x+X_i \subset C$ implies that $x+C_i \subset C$. Thus, in this case $[x,y]_C$ is the union of a translate of $C_1$ and a translate of $C_2$. This shows that $[x,y]_C$ is not a segment, and its boundary consists of the translates of two arcs of $\bd (C)$. If $||x-y||_C=2$, we may repeat the same consideration, where as $X_1$ and $X_2$ we pick, from amongst the chords of $C$ that are translates of $[x,y]$, the two that are farthest from each other. These arguments prove (a)-(c).

To prove (d), consider the set $K=C_1 \cap C_2 \neq \emptyset$, where $C_1 = p+C$ and $C_2=q+C$ for some $p \neq q$. If $K$ is a point or a segment, the statement is clearly satisfied. Otherwise, $K$ is a convex disk symmetric to some point $c$, and $\bd (K) = \left( K \cap \bd(C_1) \right) \cup \left( K \cap \bd(C_2) \right)$. Note that $K \cap \bd(C_1)$ is a closed, continuous curve, and $K \cap \bd(C_2)$ is its reflected copy about $c$. Furthermore, $K \cap \bd(C_1) \cap \bd(C_2)$ is a pair of points or a pair of parallel segments. If it is a pair of points $\{ x,y \}$, then, by the arguments in the previous paragraph, $K=[x,y]_C$. If it is a pair of segments $[x,y]$ and $[w,z]$, then $[w,z]$ is a translate of $[x,y]$. Assuming, without loss of generality, that $x-y=w-z$, we obtain that $K=[x,z]_C$. This proves (d).

To prove (e), we can apply a slight modification of the arguments for (a)-(d). 
\end{proof}

\begin{Lemma}\label{lem:continuity}
Let $x,y \in C \in \KK_o$, with $||x-y||_C < 2$. Then, for any sequences $x_m \to x$, $y_m \to y$, $C_m \to C$ with $x_m,y_m \in \Re^2$ and $C_m \in \KK_o$, where the limit for the $C_m$ is meant with respect to Hausdorff distance, we have $[x_m,y_m]_{C_m} \to [x,y]_C$. 
\end{Lemma}

\begin{proof}
Since $||x-y||_C < 2$, without loss of generality we may assume that $||x_m-y_m||_{C_m} < 2$ for all values of $m$. Consider the case that $[x,y]_C \neq [x,y]$. Then
we may assume that $[x_m,y_m]_{C_m} \neq [x_m,y_m]$ for all values of $m$. Let $X_m^1$ and $X_m^2$ denote the chords of $C_m$ that are translates of $[x_m,y_m]$, and define $X^1$ and $X_2$ similarly for $C$. Note that there is a line $L$ through $o$ that separates $X^1$ and $X^2$, and also $X_m^1$ and $X_m^2$ for all sufficiently large values of $m$. Thus, we may assume that $X^1$ and all $X_m^1$ are on the same side of $L$, and they are separated from $X^2$ and all $X_m^2$. Then, as $m \to \infty$, $X_m^1 \to X_1$ and $X_m^2 \to X_1$, which yields the statement in this case.

Assume that $[x,y]_C = [x,y]$ and $x \neq y$. Then a similar consideration can be applied, based on the observation that any sequence $\{ X_m \}$, where $X_m$ is a chord of $C_m$ that is a translate of $[x_m,y_m]$, the Euclidean distance of $X_m$ from the closer supporting line of $C_m$ parallel to $[x_m,y_m]$ tends to zero.
Finally, if $x=y$, then the statement is trivial.
\end{proof}

We observe that the statement in Lemma~\ref{lem:continuity} does not necessarily hold if $||x-y||_C = 2$. As an example, we can choose $C$ as a parallelogram, $x_m=x$ and $y_m=y$ as the midpoints of two opposite sides $S_1, S_2$ of $C$, and $\{ C_m \}$ as a sequence of $o$-symmetric hexagons inscribed in $C$ whose elements intersect $S_1$ and $S_2$ only in $x$ and $y$, respectively.

\begin{Lemma}\label{lem:smoothisdense}
Assume that $C$ has $C^{\infty}$-class boundary and strictly positive curvature. Then the family of $C$-convex disks with $C^\infty$-class boundary is an everywhere dense set in the family of $C$-convex disks.
\end{Lemma}

\begin{proof}
First, observe for any $C$-convex disk $K$, $\lambda K$ is $C$-convex for any $0 < \lambda < 1$. Furthermore, $C$-$n$-gons form an everywhere dense set in the family of $C$-convex disks. Thus, to prove the statement, it is sufficient to show that for every $0 < \lambda < 1$, every $(\lambda C)$-$n$-gon can be approximated arbitrarily well by a $C$-convex disk with $C^{\infty}$-class boundary. Let $0 < \lambda < 1$ be fixed, and let $P$ be an arbitrary $(\lambda C)$-$n$-gon. Let $0 < \varepsilon$. Note that apart from its vertices, $P$ has $C^{\infty}$-class boundary. Now we `smoothen' the vertices of $P$ by replacing a neighborhood of every vertex $p_i$ of $P$ in in $\bd (P)$ by a translate $T_i$ of an arc of $\tau_i \bd (C)$, where $\tau_i > 0$ is sufficiently small. Clearly, we can do it in such a way that the resulting curve is the boundary of a convex disk $K'$ with $C^1$-class boundary. Note that apart from the points where a side of $P$ meets one of the $T_i$, $\bd (K')$ has strictly greater curvature than the point of $\bd (C)$ with the same outer unit normal, and the same holds for the one-sided curvatures of $\bd (K')$ at the remaining points. Furthermore, we may assume that the Hausdorff distance of $K'$ and $P$ less than $\frac{\varepsilon}{2}$.

Now, we make $\bd (K')$ $C^{\infty}$-class using a standard convolution technique, described e.g. in \cite{Ghomi}.
Assume that $o \in \inter (K')$, and let $\rho_{K'} : \Re \to \Re$ be the function defined as $\rho_{K'}(\varphi) = \max \{ \lambda : \lambda (\cos \varphi, \sin \varphi) \in K' \}$. Consider a nonnegative smooth function $\delta : \Re \to \Re$ such that $\int_{-\infty}^{\infty} \delta(t) \, dt = 1$, and its support is contained in $(1,1)$. For any $\tau > 0$, let $\delta_{\tau}(t) = \delta \left( \frac{t}{\tau} \right)$, and note that the support of $\delta_{\tau}$ lies in $(-\tau,\tau)$.

Set $\rho^{\tau}_{K'}(\varphi) = \int_{-\infty}^{\infty} \rho_{K'}(\varphi-t) \delta_{\tau}(t) \, dt$.
(We can geometrically imagine this transformation as taking an integral average of the values of $\rho_{\K'}$ in the $\tau$-neighborhood of $\varphi$.)
By the basic properties of convolution,
\[
\partial^{(k)} \int_{-\infty}^{\infty} \rho_{K'}(\varphi-t) \delta_{\tau}(t) \, dt = \partial^{(k)} \int_{-\infty}^{\infty} \rho_{K'}(s) \delta_{\tau}(\varphi-s) \, ds
=  \int_{-\infty}^{\infty} \rho_{K'}(s) \partial^{(k)} \delta_{\tau}(\varphi-s) \, ds,
\]
and thus, $\rho^{\tau}_{K'}$ is $C^{\infty}$-class, and as $\tau \to 0$, $\rho_{K'}^{\tau} \to \rho_{K'}$ and $\left( \rho_{K'}^{\tau} \right)' \to \rho_{K'}'$ uniformly, and if $\tau > 0$ is sufficiently small, then for any value of $\varphi$, the curvature of $\rho^{\tau}_{K'}$ is strictly greater than the curvature of $\bd (C)$ at the point with the same outer normal. In particular, it follows that the plane curve $\rho^{\tau}_{K'} (\varphi) (\cos \varphi, \sin \varphi)$ is the boundary of a convex disk $K$, and by \cite[Theorem 3.2.12]{Schneiderbook}, $K$ is a summand of $C$, and thus, it is $C$-convex (see Remark~\ref{rem:summand}).  
\end{proof}

\begin{Definition}\label{defn:arclengthC}
Let $C \in \KK_o$, and let $x,y$ be points with $||x-y||_C \leq 2$.
Then the \emph{arc-distance $\rho_C (x,y)$ of $x,y$ with respect to $C$} (or shortly, \emph{$C$-arc-distance of $x$ and $y$}) is the minimum of the $C$-lengths of the arcs, with endpoints $x,y$, that are contained in $z+\bd(C)$ for some $z \in \Re^2$.
\end{Definition}

\begin{Remark}\label{rem:arclengthC}
For any $x,y \in \Re^2$ with $||x-y||_C \leq 2$, if $[x,y]_C \neq [x,y]$, then $\rho_C (x,y) = \frac{1}{2} \perim_C ([p,q]_C)$. Furthermore, if $[x,y]_C = [x,y]$, then $\rho_C(x,y)=||x-y||_C$.
\end{Remark}

We recall the following version of the triangle inequality from \cite[Theorem 6]{LNT2013}.

\begin{Lemma}[L\'angi, Nasz\'odi, Talata]\label{lem:triangle_inequality}
Let $C \in \KK_0$, and let $x,y,z$ be points such that each pair has a $C$-arc-distance.
\begin{itemize}
\item[(a)] If $y \in \inter [x,z]_C$, then $\rho_C(x,y)+\rho_C(y,z) \leq \rho_C(x,z)$.
\item[(b)] If $y \in \bd [x,z]_C$, then $\rho_C(x,y)+\rho_C(y,z) = \rho_C(x,z)$.
\item[(c)] If $y \notin [x,z]_C$ and $C$ is smooth, then $\rho_C(x,y)+\rho_C(y,z) \geq \rho_C(x,z)$.
\end{itemize}
\end{Lemma}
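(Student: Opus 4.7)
The plan is to first prove (b), which serves as the equality anchor, and then deduce the inequalities in (a) and (c) from (b) by a continuous deformation of the middle point $y$.

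For (b), suppose $y \in \bd[x,z]_C$. By Remark~\ref{rem:Cgons}(d), $y$ lies on one of the two boundary arcs of $[x,z]_C$, say the arc $\alpha \subset \bd(C+v_1)$, where $C+v_1$ is one of the two defining translates of $[x,z]_C$. Writing $\alpha_{xy}$ and $\alpha_{yz}$ for the sub-arcs of $\alpha$ from $x$ to $y$ and from $y$ to $z$ respectively, additivity of $C$-arc-length along $\bd(C+v_1)$ gives
\[
\arclength_C(\alpha_{xy}) + \arclength_C(\alpha_{yz}) = \arclength_C(\alpha) = \tfrac{1}{2}\perim_C([x,z]_C) = \rho_C(x,z),
\]
where the second equality uses Remark~\ref{rem:arclengthC} together with the central symmetry of $[x,z]_C$ from Remark~\ref{rem:Cgons}(d), which forces both boundary arcs of $[x,z]_C$ to have equal $C$-length. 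It then remains to identify $\alpha_{xy}$ as a boundary arc of $[x,y]_C$: since $x,y \in \bd(C+v_1)$, the translate $C+v_1$ is a defining translate of $[x,y]_C$ as well, and $\alpha_{xy}$, being the shorter of the two arcs from $x$ to $y$ on $\bd(C+v_1)$ (as a sub-arc of $\alpha$), coincides with the boundary arc of $[x,y]_C$ on $\bd(C+v_1)$. A further appeal to Remark~\ref{rem:arclengthC} then gives $\arclength_C(\alpha_{xy}) = \rho_C(x,y)$ and analogously $\arclength_C(\alpha_{yz}) = \rho_C(y,z)$; summing proves (b).

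For (a) and (c), I would employ a continuous deformation anchored at (b). Given the position $y$, pick a continuous path $t \mapsto y_t$ with $y_0 = y$ and $y_1 \in \bd[x,z]_C$ --- for (a), $y_t$ moves outward from the interior to the boundary of $[x,z]_C$; for (c), inward from outside $[x,z]_C$ to the boundary --- and consider $f(t) = \rho_C(x,y_t) + \rho_C(y_t,z) - \rho_C(x,z)$. By Remark~\ref{rem:continuity} (applied in the regime where the relevant $C$-distances stay below $2$), $f$ is continuous, and by (b), $f(1) = 0$. The crux is to show that $f$ is monotone along the chosen path: non-decreasing for (a), yielding $f(0) \leq 0$, and non-increasing for (c), yielding $f(0) \geq 0$. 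This monotonicity can be arranged by choosing the path so that the spindles $[x,y_t]_C$ and $[y_t,z]_C$ evolve monotonically with respect to inclusion, exploiting that $\rho_C(p,\cdot)$ is monotone under spindle inclusion --- if $[p,q]_C \subseteq [p,q']_C$ then $\rho_C(p,q) \leq \rho_C(p,q')$ --- which follows from the monotonicity of $C$-perimeter under convex inclusion applied via Remark~\ref{rem:arclengthC}.

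The hardest part will be part (c), where the smoothness hypothesis becomes essential. When $C$ has corners, the defining translates of a spindle $[x,y_t]_C$ can jump discontinuously as $y_t$ crosses certain critical directions, potentially breaking the monotonicity of $f$. Smoothness of $C$ ensures that the defining translates of the spindles vary continuously with the endpoints, and a tangent-line calculation at the boundary position $y_1$ then yields the correct sign of the one-sided derivatives of $f$ along the inward path. Finally, degenerate cases such as $||x-y||_C = 2$ (where the spindle becomes a full translate of $C$) and $[x,y]_C = [x,y]$ (where $\rho_C(x,y) = ||x-y||_C$) would require separate handling, most likely reducible to the generic case by continuity.
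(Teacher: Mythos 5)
First, note that the paper does not prove this lemma at all: it is imported verbatim from \cite[Theorem 6]{LNT2013}, so there is no in-paper proof to compare against, and your argument has to stand on its own. Your part (b) is essentially sound: the decomposition of the boundary arc $\alpha$ of $[x,z]_C$ through $y$ into $\alpha_{xy}$ and $\alpha_{yz}$, together with the central symmetry of the spindle and Remark~\ref{rem:arclengthC}, does give the additivity — although the step you state in passing, namely that the sub-arc $\alpha_{xy}\subset\bd(C+v_1)$ is actually contained in $[x,y]_C$ (equivalently, that it is a boundary arc of $[x,y]_C$ rather than merely an arc of some translate through $x$ and $y$), is itself a nontrivial lemma that you assert via the unjustified word ``shorter'' and would need to prove.

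The genuine gap is in parts (a) and (c). Your deformation scheme requires a path $y_t$ from $y=y_0$ to a point $y_1\in\bd[x,z]_C$ along which both spindles $[x,y_t]_C$ and $[y_t,z]_C$ grow monotonically under inclusion; in particular this forces $y_0\in[x,y_1]_C\cap[z,y_1]_C$ for the terminal point $y_1$. Such a $y_1$ need not exist. Take $C=B^2$, $x=(0,0)$, $z=(1,0)$ and $y_0=(1/2,0)$, the center of the lens $[x,z]_C$. For $y_1$ near $x$ or near $z$ the corresponding spindle is too small to contain $y_0$; for $y_1$ near the top (or bottom) vertex of the lens, say $y_1=(1/2,1-\sqrt{3}/2)$, the unit disk centered at $(0,1)$ contains $x$ and $y_1$ on its boundary but not $y_0$, so $y_0\notin[x,y_1]_C$; a short computation with the widths of the lenses rules out all intermediate positions as well. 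So no admissible path exists and monotonicity of $f$ cannot be ``arranged'' as claimed; the inequality in (a) is a genuinely global statement, not one that follows from shrinking onto the boundary case (b). Part (c) has the same structural problem and is in any case only a sketch: a sign of a one-sided derivative at the single point $y_1$ would not give monotonicity of $f$ along the whole path, and no tangent-line computation is actually carried out. A workable route (the one taken in the cited source) is instead to compare arc-lengths of nested convex arcs sharing endpoints directly, using the monotonicity of $\perim_C$ under inclusion applied to the relevant translates of $C$, rather than deforming the middle point.
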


We start with a consequence of this inequality. We note that for the special case that $C$ is a Euclidean disk, Lemma~\ref{lem:quadrangle} was proved in \cite{BCC2006}.

\begin{Lemma}\label{lem:quadrangle}
Let $p,q,r,s \in \Re^2$ be distinct points contained in a translate of the smooth $o$-symmetric convex disk $C$. Assume that $\bd \conv_C \{p,q,r,s\}$ contains all of them and that $p$ and $r$ separate $q$ and $s$ in $\bd \conv_C \{p,q,r,s\}$. Then
\[
\rho_{C}(p,q)+\rho_{C}(r,s) \leq \rho_{C}(p,r)+\rho_{C}(q,s).
\]
\end{Lemma}

\begin{proof}
We prove the statement under the special condition that none of $p,q,r,s$ is contained in the $C$-convex hull of the other three points, $[p,r]_C \neq [p,r]$, and $[q,s]_C \neq [q,s]$, as in the opposite cases a similar argument can be applied. Then,
according to our conditions, the two $C$-arcs in the boundary of $[p,r]_C$ intersect both $C$-arcs consisting of the boundary of $[q,s]_C$. Let $s'$ denote the intersection point of one of the $C$-arcs in $\bd [p,r]_C$ and one of the $C$-arcs in $\bd [q,s]_C$, where the arcs are chosen to satisfy $s' \in \bd \conv_C \{ p,q,s \}$ and $s' \in \bd  \conv_C \{p,r,s\}$ (see Figure~\ref{fig:lemma5}). (Here we mention that since any two $C$-arcs intersect in at most two, possibly degenerate, segments, $s'$ uniquely exists.) Then, by the definition of $C$-convex hull,
$s' \notin \inter [p,q]_C$ and $s' \notin \inter [r,s]_C$. Since $[q,s']_C \subset [q,s]_C$ and $[p,s']_C \subset [p,r]_C$, we also have $p \notin [q,s']_C$ and $q \notin [p,s']_C$ and $p \notin [q,s']_C$, which yields that $p,q,s'$ are in $C$-convex position. Applying a similar argument, we obtain that also $r,s,s'$ are in $C$-convex position.
Thus, by Lemma~\ref{lem:triangle_inequality}, we have $\rho_C(p,q) \leq \rho_C(p,s')+\rho_C(q,s')$ and $\rho_C(r,s) \leq \rho_C(r,s') + \rho_C(s,s')$, implying the assertion.
\end{proof}

\begin{figure}[ht]
\begin{center}
 \includegraphics[width=0.3\textwidth]{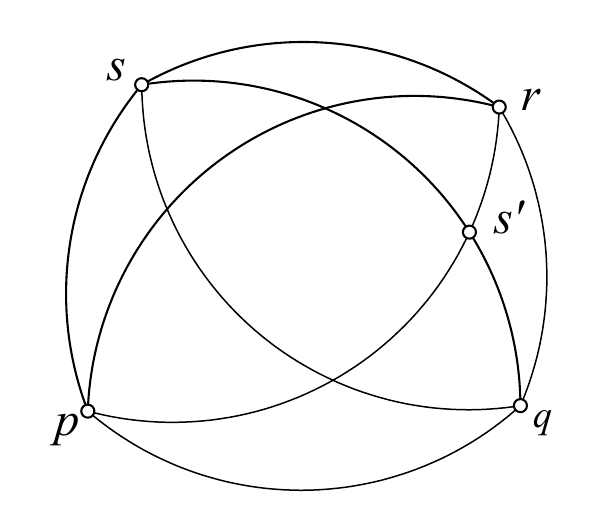}
 \caption{An illustration for the proof of Lemma~\ref{lem:quadrangle}, with a Euclidean disk as $C$.}
\label{fig:lemma5}
\end{center}
\end{figure}

\subsection{Necessary lemmas for the proofs of Theorems~\ref{thm:Dowker1}-\ref{thm:counter1}}\label{subsec:prelim2}

For any $n \geq 4$, let $\KK^n_a$ denote the subfamily of the elements $C$ of $\KK_0$ satisfying the Dowker-type inequality $\hat{a}_{n-1}^{C}(K) + \hat{a}_{n+1}^{C}(K) \leq 2 \hat{a}_{n}^{C}(K)$ for any $C$-convex disk $K$. We define $\KK^n_A$, $\KK^n_p$ and $\KK^n_P$ similarly.
Our first lemma describes the topological properties of these families.

\begin{Lemma}\label{lem:closed}
For any $n \geq 4$, $\KK^n_a, \KK^n_A, \KK^n_p$ and $\KK^n_P$ are closed.
\end{Lemma}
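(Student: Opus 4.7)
My plan is to deduce the closedness of $\KK^n_a, \KK^n_A, \KK^n_p, \KK^n_P$ from a single joint-continuity statement: if $C_m \to C$ in $\KK_o$ and $K_m \to K$ in Hausdorff distance, with each $K_m$ being $C_m$-convex and $K$ being $C$-convex, then
\[
\hat{a}_n^{C_m}(K_m) \to \hat{a}_n^C(K),
\]
and analogously for $\hat{A}_n$, $\hat{p}_n$, $\hat{P}_n$. Granted this continuity, closedness is routine: for $\KK^n_a$, take $C_m \to C$ with $C_m \in \KK^n_a$ and any $C$-convex $K$, construct a sequence of $C_m$-convex disks $K_m \to K$ (see below), apply the inequality $\hat{a}_{n-1}^{C_m}(K_m) + \hat{a}_{n+1}^{C_m}(K_m) \le 2\hat{a}_n^{C_m}(K_m)$, and pass to the limit. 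The arguments for the other three families are identical.

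For the approximation I would take $K_m = \conv_{C_m}(K)$, which is $C_m$-convex by definition. Since $K$ is $C$-convex, it is the intersection of all its circumscribing translates of $C$ (\cite[Corollary 3.13]{LNT2013}); a compactness argument on the set of admissible translation vectors, combined with $C_m \to C$, yields $K_m \to K$ in Hausdorff distance. The continuity $\hat{a}_n^{C_m}(K_m) \to \hat{a}_n^C(K)$ then splits into two halves. For the lower bound, I would pick an almost-extremal inscribed $C$-$n$-gon $Q = \conv_C(\{p_1,\ldots,p_n\}) \subseteq K$, select $p_i^m \in K_m$ with $p_i^m \to p_i$, and note that $Q_m := \conv_{C_m}(\{p_i^m\}) \subseteq K_m$ by $C_m$-convexity of $K_m$; by Remark~\ref{rem:continuity}, $Q_m \to Q$, whence $\liminf \hat{a}_n^{C_m}(K_m) \ge \area(Q)$. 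For the upper bound, select extremal inscribed $C_m$-$n$-gons $Q_m^\ast = \conv_{C_m}(\{p_1^m,\ldots,p_n^m\})$, extract a subsequence along which the vertex tuples converge to some $\{p_1,\ldots,p_n\} \subseteq K$, and identify the limit polygon as $\conv_C(\{p_i\}) \subseteq K$. The arguments for $\hat{p}_n$, $\hat{A}_n$, $\hat{P}_n$ proceed along the same lines, using monotonicity of $\perim_C$ under inclusion (recorded after Definition~\ref{defn:Cperim}) and, for the circumscribed functionals, representing a $C_m$-$n$-gon containing $K_m$ as $\bigcap_{i=1}^n (C_m + t_i^m)$ with $K_m \subseteq C_m + t_i^m$, and passing the $t_i^m$ to the limit.

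The main obstacle is the degenerate case flagged after Remark~\ref{rem:continuity}: when a limit vertex pair $(p_i,p_j)$ satisfies $||p_i-p_j||_C = 2$, the convergence $\conv_{C_m}(\{p_i^m\}) \to \conv_C(\{p_i\})$ may fail. To circumvent this, I would first prove the continuity statement under the extra hypothesis that every relevant vertex pair lies at $||\cdot||_C$-distance strictly less than $2$, then reduce the general case by a perturbation: slightly enlarge $C$ to $(1+\varepsilon)C$, run the non-degenerate argument in the $(1+\varepsilon)C$-regime (in which pairwise distances of points in $K$ are strictly below $2$), and let $\varepsilon \to 0^+$ via a diagonal selection, invoking the monotone behavior of the Dowker functionals under scaling of $C$. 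Executing this perturbation cleanly, and verifying that the approximation $K_m \to K$ survives the double limit, is the technical heart of the proof.
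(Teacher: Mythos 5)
Your overall strategy --- approximate $K$ by $C_m$-convex disks $K_m \to K$, transfer nearly extremal inscribed or circumscribed $C$-$n$-gons between the two regimes via Remark~\ref{rem:continuity}, and pass the Dowker inequality to the limit --- is essentially the argument of the paper, just organized as a direct joint-continuity statement rather than a proof by contradiction. (The paper takes $K_m = \bigcap_{x\in X}(x+C_m)$, where $K=\bigcap_{x\in X}(x+C)$, which sidesteps the question of whether $\conv_{C_m}(K)$ is a proper subset of $\Re^2$; this is a minor difference from your choice $K_m=\conv_{C_m}(K)$.)

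The genuine gap is your treatment of the degenerate case, which you yourself call ``the technical heart'' and then do not execute. The $(1+\varepsilon)C$ perturbation is not justified: membership in $\KK^n_a$ is the statement that the difference $\hat{a}_{n-1}^C(K)+\hat{a}_{n+1}^C(K)-2\hat{a}_n^C(K)$ is nonpositive for \emph{all} $C$-convex $K$, and even if each term $\hat{a}_j^{\lambda C}(K)$ were monotone in $\lambda$ (you do not prove this), termwise monotonicity says nothing about the sign of a difference; moreover the diagonal selection over $m$ and $\varepsilon$ requires a uniformity you have not established, and the family of admissible disks $K$ itself changes with $\lambda$. The paper avoids all of this with a short observation you are missing: by Remark~\ref{rem:Cgons}, if a $C$-convex disk $K$ contains two points at $C$-distance exactly $2$, then $K$ is itself a $C$-spindle, i.e.\ an intersection of at most two translates of $C$; hence it is a $C$-$j$-gon for every $j\ge 3$, so $\hat{a}_j^C(K)=\area(K)$ for all $j$ and the Dowker inequality holds for such $K$ with equality. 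These $K$ can therefore be discarded from the outset, every relevant point pair then has $C$-distance strictly less than $2$, Remark~\ref{rem:continuity} applies throughout, and your non-degenerate argument (which is sound) completes the proof. I recommend replacing the perturbation scheme by this reduction.
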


\begin{proof}
We prove the assertion only for $\KK^n_a$, as for the other quantities the proof is analogous.
Let $C \notin \KK^n_a$, and suppose for contradiction that there is a sequence $C_m \in \KK_a^n$ with $C_m \to C$.
Since $C \notin \KK^n_a$, there is a $C$-convex disk $K$ satisfying $\hat{a}_{n-1}^{C}(K) + \hat{a}_{n+1}^{C}(K) > 2 \hat{a}_{n}^{C}(K)$. By Lemma~\ref{lem:Cgons}, if $K$ contains points at $C$-distance equal to $2$, then $K$ is a $C$-spindle, which yields that $\hat{a}_j(K) = \area(K)$ for any $j \geq 3$. Thus, according to our assumptions, $K$ does not contain points at $C$-distance equal to $2$, i.e its $C$-diameter is strictly less than $2$. On the other hand, since $K$ is $C$-convex, $K$ is the intersection of the translates of $C$ that contain it. Thus, there is a set $X \subset \Re^2$ such that $K = \bigcap_{x \in X} (x+C)$. 

Let $K_m = \bigcap_{x \in X} (x+C_m)$. Then, clearly, $K_m$ is $C_m$-convex, and $K_m \to K$. For $j=n-1,n+1$, let $Q_j$ be a $C$-$j$-gon inscribed in $K$ such that $\area(Q_j)=\hat{a}_j^{C}(K)$. Then, as $K_m \to K$ and $C_m \to C$, there are sequences $\{ Q_{n-1}^m \}$ and $\{ Q_{n+1}^m \}$ such that for $j=n-1,n+1$, $Q_j^m$ is a $C_m$-$j$-gon inscribed in $K_m$, and $Q_j^m \to Q_j$. By the properties of Hausdorff distance, the $C_m$-diameter of $K_m$ is strictly less than $2$ if $m$ is sufficiently large. Then we can apply Lemma~\ref{lem:continuity}, and obtain that $\area(Q_j^m) \to \area(Q_j)$ for $j=n-1,n+1$. From this, we have $\area(Q_{n-1}^m)+\area(Q_{n+1}^m) \to \hat{a}_{n-1}^{C}(K) + \hat{a}_{n+1}^{C}(K)$. On the other hand, since $C_m \in \KK^n_a$, there is a sequence $\{ Q_n^m \}$ such that $Q_n^m$ is a $C_m$-$n$-gon inscribed in $K_m$, and $2 \area(Q_n^m) \geq \area(Q_{n-1}^m)+\area(Q_{n+1}^m)$. By compactness, we may assume that $\{ Q_n^m \}$ converges to a $C$-$n$-gon $Q_n$. Clearly, $Q_n$ is contained in $K$, and by Lemma~\ref{lem:continuity}, $\area(Q_n^m) \to \area(Q_n)$. Thus, $\hat{a}_{n-1}^{C}(K) + \hat{a}_{n+1}^{C}(K) \leq 2 \area(Q_n) \leq 2 \hat{a}_n^{C}(K)$; a contradiction.
\end{proof}

Lemma~\ref{lem:closed} readily yields Corollary~\ref{cor:closed}, since the intersection of arbitrarily many closed sets is closed.

\begin{Corollary}\label{cor:closed}
The family $\bigcap_{n=4}^{\infty} \KK_a^n$ of the elements $C$ of $\KK_0$ satisfying $\hat{a}_{n-1}^{C}(K) + \hat{a}_{n+1}^{C}(K) \leq 2 \hat{a}_{n}^{C}(K)$ for all $n \geq 4$ and all $C$-convex disks $K$ is closed in $\KK_0$. Similar statements hold for the families $\bigcap_{n=4}^{\infty} \KK_p^n$, $\bigcap_{n=4}^{\infty} \KK_A^n$ and $\bigcap_{n=4}^{\infty} \KK_P^n$.
\end{Corollary}
 
\begin{Lemma}\label{lem:defncontinuity}
If $\bd(C)$ is $C^{\infty}$-class with strictly positive curvature, the quantities defined in Definition~\ref{defn:areper} are continuous functions of $K$ for any fixed value of $n$.
\end{Lemma}

\begin{proof}
We show this only for the quantity $\hat{a}_n^C(K)$, as for the other three quantities a similar argument can be given. Now, consider a convergent sequence $K_m \to K$, where $K_m, K$ are $C$-convex disks for every $m$, and the limit is meant with respect to Hausdorff distance. Let $n \geq 3$ be fixed.
If there are some $x,y \in K$ with $||x-y||_C=2$, then by Lemma~\ref{lem:Cgons}, $K$ is the intersection of at most two translates of $C$, and we may assume that $K=[x,y]_C$. In this case $a_n(K)= \area(K)$ for any $n \geq 3$, and if we choose points $x_m \to x$ and $y_m \to y$ with $x_m, y_m \in K_m$, then $[x_m,y_m] \to [x,y]$, 
and our assumptions for $C$ imply that $[x_m,y_m]_C \to [x,y]_C$. Thus, in this case $\hat{a}_n^C(K_m) \to \hat{a}_n^C(K)$.
Next, assume that $K$ contains no points $x,y$ with $||x-y||_C=2$. By compactness, we can find points $p_i^m$, $i=1,2,\ldots,n$, satisfying $\area (\conv_C \{ p_1^m,p_2^m,\ldots, p_n^m\}) = \hat{a}_n^C(K_m)$. Without loss of generality, we may assume that the sequences $\{ p_i^m \}$ are convergent. Let $p_i^m \to p_i$. By Lemma~\ref{lem:continuity}, we have that $\area (\conv_C \{ p_1^m,p_2^m,\ldots, p_n^m\}) \to \area (\conv_C \{ p_1,p_2,\ldots, p_n\})$, implying that $\hat{a}_n^C(K) \geq \lim_{m \to \infty} \hat{a}_n^C(K_m)$. To prove the inequality $\hat{a}_n^C(K) \leq \lim_{m \to \infty} \hat{a}_n^C(K_m)$, we take a maximal area $C$-$n$-gon $\conv_C \{ q_1,q_2,\ldots, q_n\}$ in $K$, choose sequences $q_i^m \to q_i$ with $q_i^m \in K_m$ and $i=1,2,\ldots, n$, and again apply Lemma~\ref{lem:continuity}.
\end{proof}

In the following lemma, let $\S^1$ denote the Euclidean unit circle centered at the origin. For simplicity, if $x,y \in \S^1$, we denote by $\widehat{xy}$ the Euclidean closed circle arc obtained as the orbit of $x$ when it is rotated around $o$ in counterclockwise direction until it reaches $y$. If for two such arcs $\widehat{xy} \subseteq \widehat{zw} \subset \S^1$, we say that $\widehat{zw}$ \emph{covers} $\widehat{xy}$, and call such a pair of arcs a \emph{covering pair}. Let $\mathcal{S}$ denote the family of closed circle arcs $\widehat{xy}$ of $S$. Furthermore, we say that a function $f : \mathcal{S} \to \Re$ has a $k$-fold rotational symmetry for some positive integer $k$, if for any $S,S' \in \mathcal{S}$, where $S'$ is a rotated copy of $S$ in counterclockwise direction with angle $\frac{2\pi}{k}$, we have $f(S)=f(S')$. Lemma~\ref{lem:functional_1} can be regarded as a functional form of Dowker's theorems, and its proof is based on the idea of the original theorem of Dowker \cite{Dowker}.

\begin{Lemma}\label{lem:functional_1}
Let $f : \mathcal{S} \to \Re$ be a bounded function with $f(\widehat{xx})=0$ for all $x \in \S^1$. For any integer $n \geq 3$, let
\[
M_n = \sup \{ \sum_{S \in X} f( S ) : X \subset \mathcal{S} \hbox{ is a tiling of } \S^1 \hbox{ with } |X| = n \}.
\]
If for any $\widehat{x_2x_3} \subset \widehat{x_1x_4}$, we have
\[
f(\widehat{x_1x_3})+f(\widehat{x_2x_4}) \geq f(\widehat{x_1x_4})+f(\widehat{x_2x_3}),
\]
then the sequence $\{ M_n \}$ is concave.
Furthermore, if in addition, there is some positive integer $k$ such that $k | n$ and $f$ has $k$-fold rotational symmetry, and there is an $n$-element tiling  $X$ of $\S^1$ such that $M_n = \sum_{S \in X} f(S)$ then there is an $n$-element tiling $X'$ of $\S^1$ with $k$-fold rotational symmetry such that $M_n = \sum_{S \in X'} f(S)$.
\end{Lemma}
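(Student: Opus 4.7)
The plan is to prove the concavity $M_{n-1}+M_{n+1}\le 2M_n$ by a classical Dowker-style exchange argument, and then derive the $k$-fold rotational symmetry claim from a refinement of this argument applied to the rotates of a maximizer.

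For the concavity, fix $\epsilon>0$ and pick tilings $X_-=\{\widehat{a_i a_{i+1}}\}_{i=1}^{n-1}$ and $X_+=\{\widehat{b_j b_{j+1}}\}_{j=1}^{n+1}$ of $\S^1$ whose $f$-sums exceed $M_{n-1}-\epsilon$ and $M_{n+1}-\epsilon$ respectively; after a small perturbation assume the $2n$ endpoints are pairwise distinct, listed counterclockwise as $c_1,\dots,c_{2n}$. The plan is to construct a pair of $n$-tilings $(Y_1,Y_2)$ from these endpoints satisfying the key inequality
\[
\sum_{S\in Y_1}f(S)+\sum_{S\in Y_2}f(S)\;\ge\;\sum_{S\in X_-}f(S)+\sum_{S\in X_+}f(S).
\]
Combining with $\sum_{S\in Y_i}f(S)\le M_n$ for $i=1,2$ and letting $\epsilon\to 0$ gives $2M_n\ge M_{n-1}+M_{n+1}$, the claimed concavity.

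To prove the key inequality, I would first try a \emph{single-vertex transfer}: look for a vertex $b_j$ of $X_+$ whose two $X_+$-neighbours $b_{j-1},b_{j+1}$ lie in the same closed arc $\widehat{a_i a_{i+1}}$ of $X_-$, and set $Y_1=V(X_-)\cup\{b_j\}$ and $Y_2=V(X_+)\setminus\{b_j\}$. Applying the quadrangle hypothesis to the two quadruples $(a_i,b_{j-1},b_j,a_{i+1})$ and $(b_{j-1},b_j,b_{j+1},a_{i+1})$ and summing cancels the connecting term $f(\widehat{b_{j-1}a_{i+1}})$ and produces exactly the key inequality. The main obstacle is that such a triple $b_{j-1},b_j,b_{j+1}$ need not exist in degenerate interleavings of $X_-$ and $X_+$ (when no three consecutive $X_+$-vertices fall in a common $X_-$-arc). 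In that case the plan is to use the alternating partition $Y_1=\{c_{2k-1}\}, Y_2=\{c_{2k}\}$ and apply the quadrangle hypothesis once per ``clean'' $X_+$-arc $\widehat{b_j b_{j+1}}\subset\widehat{a_i a_{i+1}}$; the resulting cancellations should telescope to the desired inequality, with the bookkeeping verified by induction on the number of $a$--$b$ interleavings.

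For the symmetry claim, let $X^*$ attain $M_n$, and let $\sigma$ denote rotation by $\tfrac{2\pi}{k}$. Since $f$ is $k$-fold symmetric, each rotate $\sigma^j X^*$ is also a maximizer. Among all maximizers, I would select one $\tilde X$ minimizing the symmetry defect $D(\tilde X)=\sum_{j=1}^{k-1}\mathrm{dist}_H(\tilde X,\sigma^j\tilde X)$; the minimum is attained by compactness of the set of maximizers. If $\tilde X$ were not $\sigma$-invariant, then $\tilde X$ and $\sigma\tilde X$ would be distinct $n$-tilings, both maximizers; a same-size variant of the exchange construction applied to their $2n$ combined endpoints would produce another pair of $n$-tilings that are both maximizers but with strictly smaller defect, contradicting the minimality of $D(\tilde X)$. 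Hence $\tilde X$ is $\sigma$-invariant, furnishing the required $k$-fold symmetric maximizer.
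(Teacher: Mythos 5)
Your strategy for the concavity part---merge a near\--optimal $(n-1)$-tiling $X_-$ and $(n+1)$-tiling $X_+$ and redistribute their $2n$ endpoints into two $n$-tilings using the quadrangle hypothesis---is the same idea the paper uses, and your single-vertex transfer is correct \emph{when} three consecutive vertices of $X_+$ lie in one closed arc of $X_-$. But, as you concede, that configuration can fail (for $n=4$ the five $X_+$-vertices may be distributed $2,2,1$ over the three $X_-$-arcs), and your fallback is exactly where the proof is missing: the assertion that passing to the alternating partition ``should telescope'' with ``the bookkeeping verified by induction'' is the entire content of the lemma and is not carried out. The paper makes this step rigorous with a different monovariant: it views $X_-\cup X_+$ as a $2$-tiling of $\S^1$, calls a pair of its arcs a \emph{covering pair} when one is nested strictly inside the other, and repeatedly replaces a covering pair $\widehat{v_iv_j}\supseteq\widehat{v_kv_l}$ by $\widehat{v_iv_l},\widehat{v_kv_j}$; each such step is one application of the hypothesis, does not decrease the total $f$-sum, and strictly decreases the number of covering pairs, so the process terminates at the nesting-free configuration, which is precisely your alternating partition and splits into two $n$-tilings. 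If you insist on a one-shot telescoping you must exhibit the explicit multiset of quadruples whose inequalities sum to the claimed one; the iterated uncrossing avoids that bookkeeping entirely. Note also that your preliminary ``small perturbation'' to make the $2n$ endpoints distinct is not harmless, since $f$ is only assumed bounded, not continuous; the paper instead keeps possibly coinciding points and works with a cyclic sequence $v_1,\dots,v_{2n}$ in which repetitions are allowed.

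The symmetry claim is where your argument genuinely breaks. You choose a maximizer minimizing the symmetry defect ``by compactness of the set of maximizers,'' but $f$ is merely bounded, so the set of maximizing $n$-tilings need not be closed and the infimum of your defect functional need not be attained. Even granting a minimizer $\tilde X$, you do not show that the exchange construction applied to $\tilde X\cup\sigma\tilde X$ produces maximizers of \emph{strictly smaller} defect---the two resulting $n$-tilings are maximizers, but nothing relates them to a functional involving all $k-1$ rotates. The paper avoids both issues: it superimposes all $k$ rotates of a single maximizer $X$ into a $k$-fold tiling $Y$ with total $f$-sum $kM_n$; if $X$ is not invariant under the rotation $\sigma$ by $2\pi/k$, then $Y$ contains covering pairs, and the same uncrossing procedure turns $Y$ into a nesting-free $k$-fold tiling with total $f$-sum at least $kM_n$ whose endpoint multiset is $\sigma$-invariant. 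Because $k\mid n$, its alternating decomposition into $k$ tilings of $n$ arcs is layerwise $\sigma$-invariant, and since each layer has $f$-sum at most $M_n$, every layer attains $M_n$. You should adopt that route for the second part.
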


Before the proof, we remark that $X \subset \mathcal{S}$ is called an $m$-tiling of $\S^1$ for some positive integer $m$ if every point of $\S^1$ belongs to at least $m$ members of $X$, and to the interiors of at most $m$ members of $X$.

\begin{proof}
To prove the assertion for $\{ M_n \}$, we need to show that $M_{n-1}+M_{n+1} \leq 2M_n$ is satisfied for any $n \geq 4$. In other words, we need to show that for any tilings $X=\{ \widehat{x_0x_1}, \ldots \widehat{x_{n-2}x_{n-1}} \}$, $Y=\{ \widehat{y_0y_1}, \ldots \widehat{y_n y_{n+1}} \}$ of $\S^1$, there are tilings $Z=\{ \widehat{z_0z_1}, \ldots \widehat{z_{n-1}z_n} \}$ and $W=\{ \widehat{w_0w_1}, \ldots \widehat{w_{n-1}w_n} \}$ of $\S^1$ such that
\[
\sum_{i=1}^{n-1} f(\widehat{x_{i-1}x_i}) + \sum_{i=1}^{n+1} f(\widehat{y_{i-1}y_i}) \leq \sum_{i=1}^{n} f(\widehat{z_{i-1}z_i}) + \sum_{i=1}^n f(\widehat{w_{i-1}w_i}).
\]
Note that the union $A_0$ of the two tilings is a $2$-tiling of $\S^1$.
Assume that the points  $x_1, x_2, \ldots, x_{n-1}$, and $y_1,y_2, \ldots, y_{n+1}$ are in this counterclockwise order in $\S^1$, and that $y_1 \in \widehat{x_1x_2}$.
Due to the possible existence of coinciding points in the above two sequences, we unite these sequences as a single sequence $v_1, v_2, \ldots, v_{2n}$ in such a way that the points are in this counterclockwise order in $\S^1$, $v_1=x_1$, and removing the $x_i$ (resp. $y_j$) from this sequence we obtain the sequence $y_1, \ldots, y_{n+1}$ (resp. $x_1, \ldots, x_{n-1}$). In the proof we regard this sequence as a cyclic sequence, where the indices are determined mod $2n$, and, with a little abuse of notation, we say that $\widehat{v_iv_j}$ \emph{covers} $\widehat{v_kv_l}$ only if $\widehat{v_kv_l} \subseteq \widehat{v_iv_j}$ and $i < k < l < j < i+2n$. 
Our main goal will be to modify the $2$-tiling $A_0$ in such a way that the value of $f$ does not decrease but the number of covering pairs strictly decreases.

Note that since $A_0$ is the union of two tilings consisting of $(n-1)$ and $(n+1)$ arcs, respectively, $A_0$ contains covering pairs. 
Assume that $\widehat{v_iv_j}$ covers $\widehat{v_kv_l}$. Then let $A_1$ denote the $2$-tiling of $\S^1$ in which $\widehat{v_iv_j}$ and $\widehat{v_kv_l}$
are replaced by $\widehat{v_iv_l}$ and $\widehat{v_kv_j}$. According to our conditions, $\sum_{S \in A_0} f(S) \leq \sum_{S \in A_1} f(S)$. Furthermore, as $A_0$ and $A_1$ are $2$-tilings, and $\widehat{v_kv_l}$ is already covered twice by the arcs considered in the modification, the number of covering pairs in $A_1$ is strictly less than in $A_0$. Repeating this procedure we obtain a $2$-tiling $A_t$ of $\S^1$ for which $\sum_{S \in A_0} f(S) \leq \sum_{S \in A_t} f(S)$ and which does not contain covering pairs. Then, $A_t$ decomposes into the two tilings $\{ \widehat{v_1,v_3}, \widehat{v_3v_5}, \ldots, \widehat{v_{2n-1}v_1} \}$ and $\{ \widehat{v_2,v_4}, \widehat{v_4v_6}, \ldots, \widehat{v_{2n}v_2} \}$, each of which contains exactly $n$ arcs. This proves the assertion for $\{ M_n \}$.

Now we prove the second part. Let $X$ be an $n$-element tiling of $\S^1$ such that $M_n = \sum_{S \in X} f(S)$. Assume that $X$ does not have $k$-fold rotational symmetries. For $i=1,2,\ldots, k$, let $X_i$ denote the rotated copy of $X$ by $\frac{2i\pi}{k}$ in counterclockwise direction. Then $Y= \bigcup_{i=1}^k X_i$ is a $k$-fold tiling of $\S^1$ with $k$-fold rotational symmetry, and $\sum_{S \in Y} f(S) = k \sum_{S \in X} f(S)$. Since $X$ has no $k$-fold rotational symmetry, $Y$ contains covering pairs, and we may apply the argument in the previous paragraph.
\end{proof}

We remark that an analogous proof yields Lemma~\ref{lem:functional_2}, the proof of which we leave to the reader.

\begin{Lemma}\label{lem:functional_2}
Let $f : \mathcal{S} \to \Re$ be a bounded function with $f(\widehat{pp})=0$ for all $p \in \S^1$. For any integer $n \geq 3$, let
\[
m_n = \inf \{ \sum_{S \in X} f( S ) : X \subset \mathcal{S} \hbox{ is a tiling of } \S^1 \hbox{ with } |X| = n \}.
\]
If for any $\widehat{x_2x_3} \subset \widehat{x_1x_4}$, we have
\[
f(\widehat{x_1x_3})+f(\widehat{x_2x_4}) \leq f(\widehat{x_1x_4})+f(\widehat{x_2x_3}),
\]
then the sequence $\{ m_n \}$ is convex.
Furthermore, if in addition, there is some positive integer $k$ such that $k | n$, and $f$ has $k$-fold rotational symmetry, and there is an $n$-element tiling  $X$ of $\S^1$ such that $m_n = \sum_{S \in X} f(S)$  then there is a tiling $X'$ of $\S^1$ with $k$-fold rotational symmetry such that $m_n = \sum_{S \in X' } f(S)$.
\end{Lemma}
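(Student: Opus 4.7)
The plan is to mirror the proof of Lemma~\ref{lem:functional_1} with all inequalities reversed; because the hypothesis on $f$ is itself the reverse of the one in Lemma~\ref{lem:functional_1}, the same ``swap'' operation on $2$-tilings of $\S^1$ does the job. Concretely, for the convexity statement $m_{n-1}+m_{n+1}\geq 2m_n$, I would start from arbitrary tilings $X=\{\widehat{x_0x_1},\ldots,\widehat{x_{n-2}x_{n-1}}\}$ and $Y=\{\widehat{y_0y_1},\ldots,\widehat{y_ny_{n+1}}\}$ of cardinality $n-1$ and $n+1$, form the $2$-tiling $A_0 := X\cup Y$, and order the $2n$ endpoints cyclically as $v_1,v_2,\ldots,v_{2n}$ in counterclockwise order. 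As in the proof of Lemma~\ref{lem:functional_1}, whenever the current $2$-tiling contains a covering pair $\widehat{v_iv_j}\supseteq\widehat{v_kv_l}$ with $i<k<l<j<i+2n$, I would replace them by $\widehat{v_iv_l}$ and $\widehat{v_kv_j}$. Applying the hypothesis of Lemma~\ref{lem:functional_2} with $x_1=v_i,\,x_2=v_k,\,x_3=v_l,\,x_4=v_j$ shows that each swap \emph{weakly decreases} the total $f$-value while strictly decreasing the number of covering pairs. After finitely many swaps we reach a covering-pair-free $2$-tiling that splits into the two $n$-element tilings $\{\widehat{v_1v_3},\widehat{v_3v_5},\ldots,\widehat{v_{2n-1}v_1}\}$ and $\{\widehat{v_2v_4},\widehat{v_4v_6},\ldots,\widehat{v_{2n}v_2}\}$, whose $f$-sums therefore bound $\sum_{S\in X}f(S)+\sum_{S\in Y}f(S)$ from below. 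Taking infima gives $2m_n\leq m_{n-1}+m_{n+1}$.

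For the symmetry statement, assume that $k\mid n$, that $f$ is $k$-fold symmetric, and that $X$ is an $n$-element tiling with $m_n=\sum_{S\in X}f(S)$. If $X$ is not $k$-fold symmetric, I would form $Y:=\bigcup_{i=1}^k X_i$ (as a multi-set), where $X_i$ is the rotate of $X$ by $\frac{2\pi i}{k}$. Then $Y$ is a $k$-fold symmetric $k$-fold tiling with $\sum_{S\in Y}f(S)=km_n$ that necessarily contains covering pairs, and I would run the same swap reduction --- performing each swap simultaneously on all $k$ rotated copies --- until no covering pairs remain. The resulting $k$-fold symmetric $k$-fold tiling $Y'$ decomposes into $k$ rotated copies of a single $n$-element, $k$-fold symmetric tiling $X'$, and the chain $k\sum_{S\in X'}f(S)=\sum_{S\in Y'}f(S)\leq km_n$ combined with the definition of $m_n$ forces equality.

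The only step that needs verification beyond a direct transcription from Lemma~\ref{lem:functional_1} is the ``simultaneous $k$-fold swap'' in the symmetry part: one must check that the $k$ rotated covering pairs can be swapped together without collisions, so that the output is still a $k$-fold tiling with $k$-fold rotational symmetry. This is the main obstacle, but it is handled exactly as in the corresponding step of Lemma~\ref{lem:functional_1}; in particular, one can always choose an ``innermost'' covering pair, for which the $k$ rotated copies either coincide or are pairwise disjoint, so the simultaneous swap is well defined. The rest of the argument is a routine translation of the proof of Lemma~\ref{lem:functional_1}.
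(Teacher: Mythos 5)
Your proposal is correct and is essentially the paper's intended argument: the paper itself only states that Lemma~\ref{lem:functional_2} follows by a proof analogous to that of Lemma~\ref{lem:functional_1}, and your transcription with the inequalities reversed (swaps on covering pairs now weakly decreasing the total $f$-value, infima in place of suprema) is exactly that analogue. Your extra care in the symmetry part --- performing the $k$ rotated swaps simultaneously and checking that the $k$ covering pairs involve $2k$ pairwise distinct arcs, so that the intermediate multi-tilings stay $k$-fold symmetric --- addresses a point the paper glosses over, and is sound; the only cosmetic slip is that the final covering-free $k$-fold tiling splits into $k$ classes each of which is individually $k$-fold symmetric with $f$-sum exactly $m_n$, rather than into $k$ rotated copies of one tiling, but the conclusion is the same.
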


In the next lemma, by the partial derivatives $(\partial_p f) (\widehat{p_0q_0})$ (resp. $(\partial_q f) (\widehat{p_0q_0})$) of the function $f(\widehat{pq})$ at $\widehat{p_0q_0}$, we mean the derivative of the function $f(\widehat{p(t)q_0}$) (resp. $f(\widehat{q(t)p_0})$) at $t=0$, where $p(t)$ (resp. $q(t)$) is the rotated copy of $p_0$ (resp. $q_0$) around $o$ by angle $t$ in counterclockwise direction.

\begin{Lemma}\label{lem:derivatives}  
Let $f : \mathcal{S} \to \Re$ be a bounded function with $f(\widehat{pp}) = 0$ for all $p \in \S^1$. Assume that for any $\widehat{p_0q_0} \in \S^1$, where $p_0 \neq q_0$, $(\partial_p \partial_q f)(\widehat{p_0q_0})$ is a continuous function of $\widehat{p_0q_0}$ in both variables.
Then, for any $x_1, x_2, x_3, x_4 \in \S^1$ in this counterclockwise order, we have
\[
f(\widehat{x_1x_3})+f(\widehat{x_2x_4}) \geq f(\widehat{x_1x_4})+f(\widehat{x_2x_3})
\]
if and only if $(\partial_p \partial_q f)(\widehat{p_0q_0}) \geq 0$ for all $p_0 \neq q_0$.
Similarly, for any $x_1, x_2, x_3, x_4 \in \S^1$ in this counterclockwise order, we have
\[
f(\widehat{x_1x_3})+f(\widehat{x_2x_4}) \leq f(\widehat{x_1x_4})+f(\widehat{x_2x_3})
\]
if and only if $(\partial_p \partial_q f)(\widehat{p_0q_0}) \leq 0$ for all $p_0 \neq q_0$.
\end{Lemma}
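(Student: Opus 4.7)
The plan is to parametrize arcs by the pair of angles of their endpoints and recognize the desired inequality as the classical supermodularity (respectively submodularity) condition for a function of two real variables, which for sufficiently smooth functions is equivalent to the sign of the mixed partial.

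Lift the angular coordinates so that, when $x_1, x_2, x_3, x_4 \in \S^1$ are in counterclockwise order, they correspond to reals $\alpha_1 < \alpha_2 < \alpha_3 < \alpha_4 < \alpha_1 + 2\pi$, and write $p_\alpha \in \S^1$ for the point with angle $\alpha$. The central step is to establish the representation
\[
f(\widehat{x_1 x_3}) + f(\widehat{x_2 x_4}) - f(\widehat{x_1 x_4}) - f(\widehat{x_2 x_3}) = \int_{\alpha_3}^{\alpha_4} \int_{\alpha_1}^{\alpha_2} (\partial_p \partial_q f)(\widehat{p_\alpha p_\beta}) \, d\alpha \, d\beta.
\]
If some of the $x_i$ coincide then both sides of the desired inequality are equal and there is nothing to prove, so we may assume the $x_i$ are pairwise distinct. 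Since $\alpha_2 < \alpha_3$, the rectangle $[\alpha_1,\alpha_2] \times [\alpha_3,\alpha_4]$ is bounded away from the diagonal, so by hypothesis the integrand is continuous there. The identity itself is obtained by applying the fundamental theorem of calculus twice: first in $\alpha$, using continuity of $\partial_p \partial_q f$ to recover $\partial_q f(\widehat{p_{\alpha_2} p_\beta}) - \partial_q f(\widehat{p_{\alpha_1} p_\beta})$ for each fixed $\beta \in [\alpha_3,\alpha_4]$, and then in $\beta$, using that $\beta \mapsto f(\widehat{p_{\alpha_i} p_\beta})$ is an antiderivative of $\beta \mapsto \partial_q f(\widehat{p_{\alpha_i} p_\beta})$, to collect the four-term difference on the left-hand side; Fubini legitimizes the implicit swap of order.

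Given this representation, the $(\Leftarrow)$ direction is immediate: a fixed sign of $\partial_p \partial_q f$ off the diagonal forces the same sign on the double integral, and hence on the left-hand side, for every admissible quadruple. For the $(\Rightarrow)$ direction, fix $p_0 \neq q_0$ with angles $\alpha_0 < \beta_0 < \alpha_0 + 2\pi$; for all sufficiently small $\epsilon,\delta > 0$ the points at angles $\alpha_0$, $\alpha_0 + \epsilon$, $\beta_0$, $\beta_0 + \delta$ lie in counterclockwise order on $\S^1$, and inserting them into the hypothesized inequality, dividing by $\epsilon \delta$, and passing to the limit via the representation and continuity of $\partial_p \partial_q f$ at $\widehat{p_0 q_0}$ yields the asserted sign of $(\partial_p \partial_q f)(\widehat{p_0 q_0})$. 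The step I expect to be most delicate is the rigorous justification of the two FTC applications, since the hypothesis supplies only continuity of the mixed partial off the diagonal; the resolution is that continuity of $\partial_p \partial_q f$ in $(p,q)$ already guarantees enough regularity of $\partial_q f$ in $p$ for the first FTC step, while the definition of $\partial_q f$ as a $q$-derivative of $f$ handles the second.
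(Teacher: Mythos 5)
Your proof is correct and follows essentially the same route as the paper's: both express the second difference $f(\widehat{x_2x_4})-f(\widehat{x_2x_3})-f(\widehat{x_1x_4})+f(\widehat{x_1x_3})$ as the double integral of $\partial_p\partial_q f$ over a rectangle bounded away from the diagonal (Newton--Leibniz applied twice), and both obtain the converse by localizing near a point where the mixed partial has the wrong sign and invoking its continuity. The only quibble is your aside that coinciding $x_i$ make both sides equal --- that is not true when $x_2=x_3$ --- but the paper's proof likewise treats only the strictly ordered case, so this does not affect the comparison.
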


\begin{proof}
We prove only the first part. Assume that $(\partial_p \partial_q f)(\widehat{p_0q_0}) \geq 0$ for all $p_0 \neq q_0$. Let $\widehat{x_2x_3} \subset \widehat{x_1x_4}$.
Then, by the Newton--Leibniz Theorem we have
\[
0 \leq \int_{x_3}^{x_4} \int_{x_1}^{x_2} (\partial_p \partial_q f)(\widehat{p_0q_0}) \, d p_0 \, d q_0 = f(\widehat{x_2x_4})-f(\widehat{x_2x_3})-f(\widehat{x_1x_4})+f(\widehat{x_1x_3}).
\]
Furthermore, if we have $(\partial_p \partial_q f)(\widehat{p_0q_0}) < 0$ for some $p_0 \neq q_0$, then, by continuity and the same argument, there are some points $x_1,x_2$ and $x_3,x_4$ sufficiently close to $p_0$ and $q_0$, respectively, such that $\widehat{x_2x_3} \subset \widehat{x_1x_4}$, and $0 > f(\widehat{x_2x_4})-f(\widehat{x_2x_3})-f(\widehat{x_1x_4})+f(\widehat{x_1x_3})$.
\end{proof}

\begin{Remark}
The main tool of the proof of Theorems~\ref{thm:Dowker1} and \ref{thm:Dowker2} for $\hat{A}_n^C(K)$ with any fixed $K$ is to find a function $f : \mathcal{S} \to \Re$ such that the quantity $m_n$ in Lemma~\ref{lem:functional_2} coincides with $\hat{A}_n^C(K)$, and check that the conditions required by the lemma are satisfied. In the proof for $\hat{p}_n^C(K)$ we apply the same idea, but we use Lemma~\ref{lem:functional_1}. The proof for $\hat{P}_n^C(K)$ follows the same line, but we also use Lemma~\ref{lem:derivatives} together with Lemma~\ref{lem:functional_2}.
\end{Remark}

\section{Proof of Theorems~\ref{thm:Dowker1} and \ref{thm:Dowker2}}\label{sec:proofs}

Note that by Lemma~\ref{lem:closed} and Corollary~\ref{cor:closed}, it is sufficient to prove 
Theorem~\ref{thm:Dowker1} for any everywhere dense subset of $\KK_o$, and applying a similar consideration, we have the same for Theorem~\ref{thm:Dowker2}. Thus, we may assume that $C$ has $C^{\infty}$-class boundary and strictly positive curvature \cite{Schneider, Schmuckenschlaeger}. Furthermore, by Lemmas~\ref{lem:smoothisdense} and \ref{lem:defncontinuity}, we may assume that $K$ has $C^{\infty}$-class boundary, and the curvature of $\bd(K)$ at any point $p$ is strictly greater than the curvature of $\bd(C)$ at the point $q$ with the same outer normal as $p$.

\begin{Remark}\label{rem:longestchord}
Under the above conditions, for any points $p,q \in \bd (K)$, $[p,q]_C \setminus \{ p,q \} \subset \inter (K)$.
\end{Remark}

In the proof we identify $\S^1$ with the set $\Re / \{ 2k\pi : k \in \mathbb{Z}\}$. Let us parametrize $\bd(K)$ as the curve $\Gamma : \S^1 \to \Re^2$, where the outer unit normal vector at $\Gamma(\varphi)$ is $(\cos \varphi, \sin \varphi)$.
Then, for any two points $\Gamma(\varphi_1), \Gamma(\varphi_2)$ with $\varphi_1 < \varphi_2 < \varphi_1+2\pi$, let us denote the arc of $\Gamma$ connecting them in counterclockwise direction by $\Gamma|_{[\varphi_1,\varphi_2]}$. Furthermore, recall \cite[Corollary 3.13]{LNT2013}, stating that $K$ is the intersection of the translates of $C$ containing it. Thus, for any $\varphi \in [0,2\pi]$, there is a unique translate $x+C$ of $C$ containing $K$ with $\Gamma(\varphi) \in \bd (x+C)$.
We denote this translate by $C(\varphi)=x(\varphi)+C$, and call it the \emph{supporting $C$-disk} of $K$ at $\Gamma(\varphi)$ (see Figure~\ref{fig:regions}).

\begin{figure}[ht]
\begin{center}
 \includegraphics[width=0.9\textwidth]{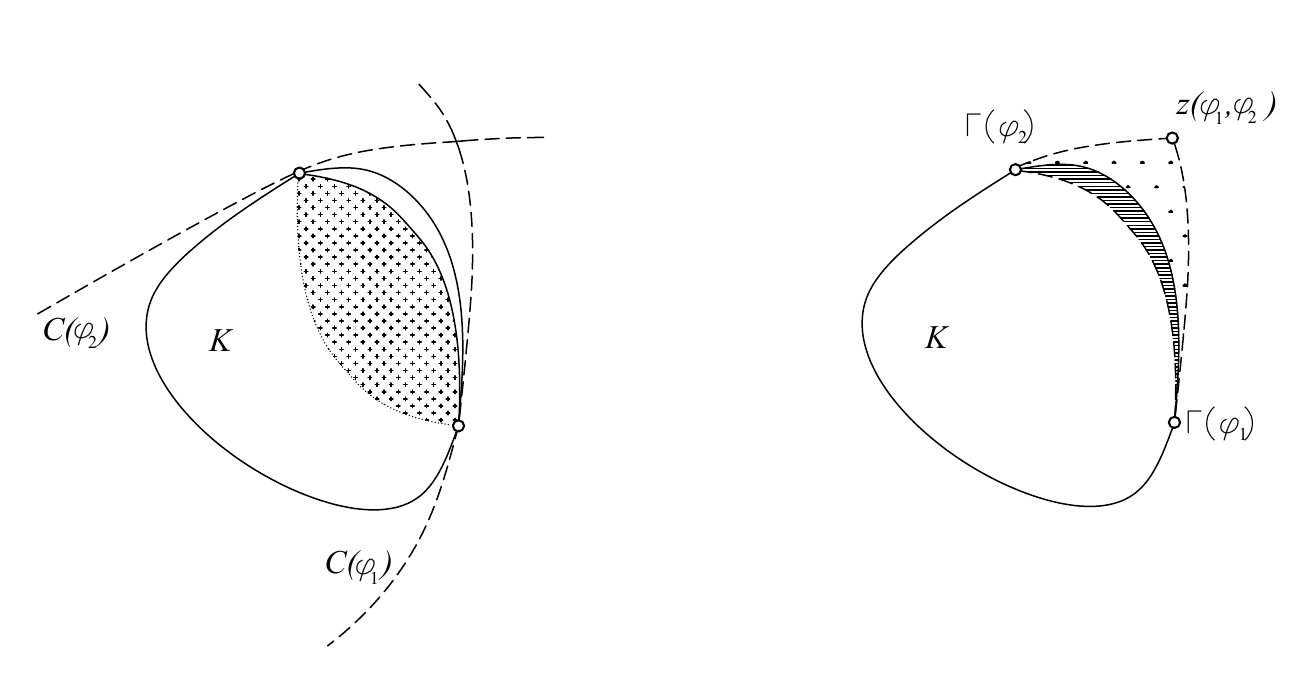}
 \caption{An illustration for the notation in the proof of Theorems~\ref{thm:Dowker1} and \ref{thm:Dowker2}. The left panel shows the $C$-spindle $[\Gamma(\varphi_1), \Gamma(\varphi_2)]_C$ (the region with little crosses), and the boundaries of the support disks $C(\varphi_1)$, $C(\varphi_2)$ (dashed lines). The right panel shows the regions $r(\varphi_1,\varphi_2)$ (with horizontal stripes) and $R(\varphi_1,\varphi_2)$ (with dots).}
\label{fig:regions}
\end{center}
\end{figure}

We define the following regions and quantities:

\begin{enumerate}
\item[(i)] $r(\varphi_1,\varphi_2)$ is the closure of the connected component of $K \setminus [\Gamma(\varphi_1), \Gamma(\varphi_2)]_C$ containing $\Gamma|_{[\varphi_1,\varphi_2]}$;
\item[(ii)] $R(\varphi_1,\varphi_2)$ is the closure of the connected component of $(C(\varphi_1) \cap C(\varphi_2) \setminus K)$ containing $\Gamma|_{[\varphi_1,\varphi_2]}$;
\end{enumerate}
\begin{enumerate}
\item[(1)] $p(\varphi_1,\varphi_2) = \perim_C(r(\varphi_1,\varphi_2)) - \arclength_C(\Gamma|_{[\varphi_1,\varphi_2]})$;
\item[(2)] $A(\varphi_1,\varphi_2) = \area(R(\varphi_1,\varphi_2))$;
\item[(3)] $P(\varphi_1,\varphi_2) = \perim_C(R(\varphi_1,\varphi_2)) - \arclength_C(\Gamma|_{[\varphi_1,\varphi_2]})$.
\end{enumerate}

We observe that since we have identified $\S^1$ with $\Re / \{ 2k\pi : k \in \mathbb{Z}\}$, the functions $p$, $A$ and $P$ are defined on the closed circle arcs of $\S^1$.

\subsection{The proof of Theorems~\ref{thm:Dowker1} and \ref{thm:Dowker2} for $\hat{A}_n^C(K)$}\label{area_circum}

Let $I[X] : \Re^2 \to \Re$ denote the indicator function of $X \subset \Re^2$. Then it can be seen directly that for any $\varphi_1 < \varphi_2 < \varphi_3 < \varphi_4 < \varphi_1+2\pi$, the function
\[
I[R(\varphi_1,\varphi_4)] + I[R(\varphi_2,\varphi_3)] - I[R(\varphi_1,\varphi_3)]- I[R(\varphi_2,\varphi_4)]
\]
has nonnegative values at every point. Thus, the conditions of Lemma~\ref{lem:functional_2} are satisfied for $A(\varphi_1,\varphi_2)$, implying the statement.

\subsection{The proof of Theorems~\ref{thm:Dowker1} and \ref{thm:Dowker2} for $\hat{p}_n^C(K)$}

Let $\varphi_1 < \varphi_2 < \varphi_3 < \varphi_4 < \varphi_1+2\pi$. Then, by Lemma~\ref{lem:quadrangle},
\[
\rho_{C}(\Gamma(\varphi_1),\Gamma(\varphi_4))+\rho_{C}(\Gamma(\varphi_2),\Gamma(\varphi_3)) \leq \rho_{C}(\Gamma(\varphi_1),\Gamma(\varphi_3))+\rho_{C}(\Gamma(\varphi_2),\Gamma(\varphi_4)).
\]
Thus, the conditions of Lemma~\ref{lem:functional_1} are satisfied for $p(\varphi_1,\varphi_2)$, implying our statement.

\subsection{The proof of Theorems~\ref{thm:Dowker1} and \ref{thm:Dowker2} for $\hat{P}_n^C(K)$}

By Lemmas \ref{lem:functional_2} and \ref{lem:derivatives}, it is sufficient to prove that for any $\varphi_1 < \varphi_2 < \varphi_1+\pi$, the function $\partial_{\varphi_1} \partial_{\varphi_2} P$ is a continuous nonpositive function; then the statement follows from Lemmas~\ref{lem:derivatives} and \ref{lem:functional_2} applied for $P(\varphi_1,\varphi_2)$.
In the remaining part of the subsection we prove this property.

For brevity, for any $\alpha < \beta < \alpha +2\pi$, we define $z(\alpha,\beta)$ as the intersection point of $\bd(C(\alpha))$ and $\bd(C(\beta))$ contained in the boundary of $R(\alpha,\beta)$.
First, observe that $P(\varphi_1,\varphi_2) = \rho_C(\Gamma(\varphi_1),z(\varphi_1,\varphi_2))+ \rho_C(z(\varphi_1,\varphi_2),\Gamma(\varphi_2))$. Clearly, since $C$ has $C^{\infty}$-class boundary, $\rho_C(\cdot,\cdot)$ is a $C^{\infty}$-class function, implying that $P(\varphi_1,\varphi_2)$ is $C^{\infty}$-class, and $\partial_{\varphi_1} \partial_{\varphi_2} P$ is continuous.

 Now, let $0 < | \Delta_1| , |\Delta_2 | \leq \varepsilon$ for some sufficiently small $\varepsilon > 0$, and set $p=z(\varphi_1,\varphi_2)$, $q_1=z(\varphi_1,\varphi_2+\Delta_2)$, $q_2 = z(\varphi_1 + \Delta_1,\varphi_2)$ and $q=z(\varphi_1+\Delta_1,\varphi_2+\Delta_2)$.

To prove the assertion, it is sufficient to prove that
\[
0 \geq \frac{1}{\Delta_1} \left( \frac{P(\varphi_1+\Delta_1,\varphi_2+\Delta_2)-P(\varphi_1+\Delta_1,\varphi_2)}{\Delta_2} - \frac{P(\varphi_1,\varphi_2+\Delta_2)-P(\varphi_1,\varphi_2)}{\Delta_2}\right) =
\]
\[
= \frac{1}{\Delta_1 \Delta_2} \left( P(\varphi_1+\Delta_1,\varphi_2+\Delta_2) -  P(\varphi_1+\Delta_1,\varphi_2) - P(\varphi_1,\varphi_2+\Delta_2) + P(\varphi_1,\varphi_2) \right).
\]

We do it in the case that $\Delta_1 < 0$ and $\Delta_2 > 0$, in the other cases a straightforward modification yields the assertion.
Note that in this case it is sufficient to show that
\[
\rho_C(p,q_1)+\rho_C(p,q_2) \leq \rho_C(q,q_1)+\rho_C(q,q_2).
\]
For $i=1,2$, let $v_i$ denote the tangent vector of $C(\varphi_i)$ at $p$ pointing `towards' $q_i$ in its boundary, and let $w_i$ denote the tangent vector of $\bd K$ at $\Gamma(\varphi_i)$ pointing towards $p$ in $\bd(C(\varphi_i))$.

\begin{figure}[ht]
\begin{center}
 \includegraphics[width=0.5\textwidth]{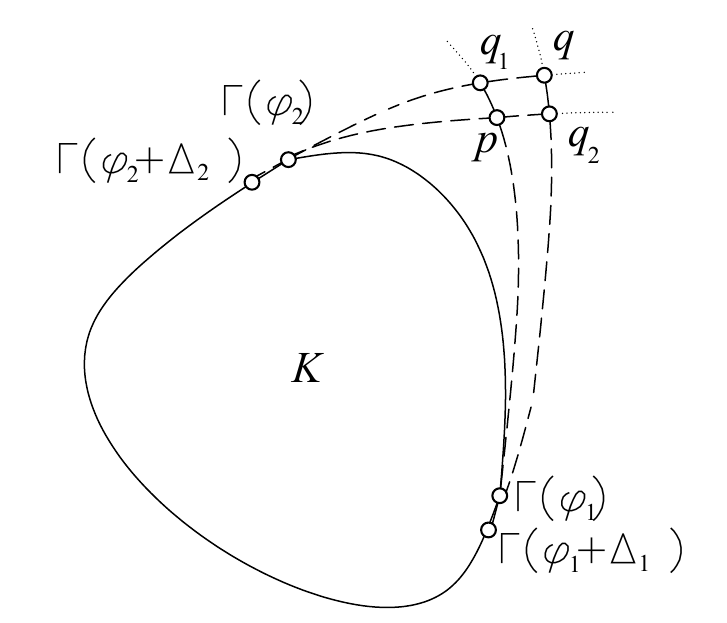}
 \caption{Notation for the proof of Theorems~\ref{thm:Dowker1} and \ref{thm:Dowker2} for $\hat{P}_n^C(K)$.}
\label{fig:circumperim}
\end{center}
\end{figure}

\begin{Lemma}\label{lem:translation}
Let $C(\varphi)= x(\varphi)+C$. Then $\lim_{\Delta \to 0 \pm 0} \frac{x(\varphi+\Delta)-x(\varphi)}{|x(\varphi+\Delta)-x(\varphi)|} = \pm v$ for any value of $\varphi$, where $v$ is the unit tangent vector of $\bd(K)$ at $\Gamma(\varphi)$ pointing in the positive direction.
\end{Lemma}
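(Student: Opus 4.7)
The plan is to express the center $x(\varphi)$ of the support disk $C(\varphi)$ as a difference of two $C^\infty$-class curves, differentiate once, and read off the one-sided limits from the resulting nonzero velocity vector. Let $p : \S^1 \to \bd(C)$ denote the inverse Gauss map of $C$, so that $p(\varphi)\in\bd(C)$ is the unique point with outer unit normal $(\cos\varphi,\sin\varphi)$; since $C$ is $C^\infty$-smooth with strictly positive curvature, $p$ is a $C^\infty$ diffeomorphism. Because $C(\varphi)=x(\varphi)+C$ is the translate of $C$ containing $K$ whose boundary passes through $\Gamma(\varphi)$ with common outer normal $(\cos\varphi,\sin\varphi)$, the identity $\Gamma(\varphi)=x(\varphi)+p(\varphi)$ must hold, giving
\[
x(\varphi)=\Gamma(\varphi)-p(\varphi),
\]
which is manifestly $C^\infty$ in $\varphi$.

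Next I would differentiate this relation. Both $\Gamma'(\varphi)$ and $p'(\varphi)$ are tangent vectors to their respective boundaries at points sharing the outer normal $(\cos\varphi,\sin\varphi)$, hence are scalar multiples of the common unit tangent $v=(-\sin\varphi,\cos\varphi)$; their magnitudes equal the radii of curvature $R_K(\varphi)$ and $R_C(\varphi)$, and they carry the same sign because both parametrizations orient $\bd(K)$ and $\bd(C)$ by the outer-normal convention in the counterclockwise sense. Consequently
\[
x'(\varphi)=\bigl(R_K(\varphi)-R_C(\varphi)\bigr)\,v.
\]
Invoking the running regularity hypothesis from the paragraph preceding Remark~\ref{rem:longestchord}, which asserts that the curvature of $\bd(K)$ strictly exceeds the curvature of $\bd(C)$ at corresponding outer normals, I conclude that $R_K(\varphi)-R_C(\varphi)\neq 0$, so $x'(\varphi)$ is a nonzero vector parallel to $v$.

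The lemma then falls out of elementary calculus: since $x$ is $C^\infty$ with $x'(\varphi)\neq 0$, the one-sided limits of $(x(\varphi+\Delta)-x(\varphi))/|x(\varphi+\Delta)-x(\varphi)|$ as $\Delta\to 0\pm 0$ exist, are unit vectors, and agree with $\pm x'(\varphi)/|x'(\varphi)|=\pm v$, which is the claimed identity.

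I do not foresee a substantive obstacle; the only point that requires a bit of care is the sign bookkeeping between the two $\pm$ signs in the statement and the sign of $R_K(\varphi)-R_C(\varphi)$, which is determined by the convention the paper adopts for the ``positive direction'' of $v$. Substantively, the lemma is a transparent consequence of the smoothness of the two Gauss maps and the strict curvature gap imposed between $\bd(K)$ and $\bd(C)$ at the start of Section~\ref{sec:proofs}.
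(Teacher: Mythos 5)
Your proposal is correct and follows essentially the same route as the paper: both write $x(\varphi)=\Gamma(\varphi)-\Theta(\varphi)$ (your $p$ is the paper's $\Theta$), compare the two increments via the radii of curvature, and conclude from the strict curvature gap that the difference is a nonzero multiple of $v$. The sign ambiguity you flag at the end is present in the paper's own proof as well (under the running hypothesis $\kappa_\Gamma>\kappa_\Theta$ the factor $R_K-R_C$ is negative), so your treatment is no less complete than the original.
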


\begin{proof}
Let $\Theta(\varphi)$ denote the point of $\bd(C)$ with outer unit normal vector $(\cos \varphi, \sin \varphi)$. Then $x(\varphi)=\Gamma(\varphi)-\Theta(\varphi)$ and
more generally,
\[
x(\varphi+\Delta)-x(\varphi) = \left( \Gamma(\varphi+\Delta)- \Gamma(\varphi) \right) - \left( \Theta(\varphi+\Delta)- \Theta(\varphi) \right). 
\]
Note that $\lim_{\Delta \to 0 \pm 0} \frac{\Gamma(\varphi+\Delta)- \Gamma(\varphi)}{|\Gamma(\varphi+\Delta)- \Gamma(\varphi)|} = \lim_{\Delta \to 0 \pm 0} \frac{\Theta(\varphi+\Delta)- \Theta(\varphi)}{|\Theta(\varphi+\Delta)- \Theta(\varphi)|} = \pm v$, and, by the choice of the parametrization of $\Gamma$ and $\Theta$, $\lim_{\Delta \to 0} \frac{|\Theta(\varphi+\Delta)- \Theta(\varphi)|}{|\Gamma(\varphi+\Delta)- \Gamma(\varphi)|} = \frac{\kappa_{\Gamma}(\varphi)}{\kappa_{\Theta}(\varphi)}$, where $\kappa_{\Gamma}(\varphi)$ and $\kappa_{\Theta}(\varphi)$ denote, the curvature of $\Gamma$ and $\Theta$ at $\Gamma(\varphi)$ and $\Theta(\varphi)$,respectively. Thus, the assertion follows from our assumption that $\kappa_{\Theta}(\varphi) \neq \kappa_{\Gamma}(\varphi)$.
\end{proof}

By Lemma~\ref{lem:Cgons}, $C(\varphi_1) \cap C(\varphi_2)$ is the $C$-spindle of $p$ and another point, which we denote by $p'$.
By convexity, the tangent vectors of $\bd(C(\varphi_1))$ pointing in counterclockwise direction, turn in counterclockwise direction from $p$ to $p'$. Thus, the directions of the vectors $v_2, w_1, v_1$ are in this order in counterclockwise orientation, and the same holds for the vectors $v_2, w_2, v_1$.

For $i=1,2$, let $C_i^* = C(\varphi_i+\Delta_i)=y_i + C(\varphi_i)$, and set $Q=C_1^* \cap C_2^*$. Then, by Lemma~\ref{lem:translation}, if $\Delta_i$ is sufficiently small, we have that the vectors $y_1,y_2$ are between $v_1$ and $v_2$ according to counterclockwise orientation. 

Consider the translate $C_i'$ of $C(\varphi_i)$ by $q_i-p$. The boundary of this translate contains $q_i$, and $v_i$ is a tangent vector of $C_i'$ at $q_i$. Thus, if $q' = q_1+q_2-p$ (i.e. $q'$ is the unique point for which $p,q_1,q',q_2$ are the vertices of a parallelogram in this counterclockwise order), then $q'$ lies in the boundary of both $C_1'$ and $C_2'$. On the other hand, by our observation about the tangent lines, if $\Delta_i$ are sufficiently small, then $q'$ is contained in $Q$. By symmetry, $\rho_C(p,q_1) = \rho_C(q',q_1)$ and $\rho_C(p,q_2) =\rho_C(q',q_2)$, and thus, the required inequality follows from the remark after Definition~\ref{defn:Cperim}.

\section{Proof of Theorem~\ref{thm:counter1}}\label{sec:counter1}

We prove the statement in three steps.
\begin{itemize}
\item In the first step, we construct some $C_1 \in \KK_o$ and a $C_1$-convex disk $K_1 = \conv_{C_1} \{ p_1, p_2, p_5 p_6 \}$ such that for any $x \in K_1$, we have
$\area([p_1,p_6]_{C_1})+\area(\conv_{C_1} \{ p_1,p_2,p_5,p_6 \}) > 2 \area(\conv_{C_1}\{p_1, p_6, x \})$.
\item In the second step, for every $n \geq 4$ we extend the above example to construct some $C_n \in \KK_0$ and a $C_n$-convex disk $K_n$ such that $\hat{a}_{n-1}^{C_n}(K_n) + \hat{a}_{n+1}^{C_n}(K_n) > 2 \hat{a}_{n}^{C_n}(K_n)$.
\item In the third step, we show that a suitable small linear image of the essential parts of $\bd (C_n)$ and $\bd(K_n)$ from Step 2 can be embedded in the boundary of any $C \in \KK_o$ to obtain a slightly modified convex disk, denoted by $C'$, such that a similarly modified linear image $K'$ of $K_n$ will be $C'$-convex, and satisfies $\hat{a}_{n-1}^{C'}(K') + \hat{a}_{n+1}^{C'}(K') > 2 \hat{a}_{n}^{C'}(K')$.
\end{itemize}

For brevity, for any points $z_1,z_2, \ldots, z_k \in \Re^2$, we set $[z_1,z_2,\ldots,z_k] = \conv \{ z_1,z_2,\ldots, z_k \}$ and
$[z_1,z_2,\ldots,z_k]_C = \conv_C \{ z_1,z_2,\ldots, z_k \}$.

\noindent
\textbf{Step 1}.\\
Let us fix a Cartesian coordinate system, and consider the points $p_1=(0,-1-t)$, $p_2=(2.1,-0.9-t)$, $p_3=(t+2,-1)$, $p_4=(t+2,1)$, $p_5=(2.1, 0.9+t)$, $p_6=(0,1+t)$, $q_1=(t,-1)$, $q_2=(t,1)$, $q_3=(-t,1)$ and $q_4=(-t,-1)$ (see Figure~\ref{fig:counter_1}). Here, we note that $p_4-q_2=p_3-q_1=(2,0)$, but $p_2-p_1=(2.1,0.1) \neq (2,0)$ and $p_5-p_6=(2.1, -0.1) \neq (2,0)$. In addition, we note that the segments $|p_2,p_3], [p_4,p_5]$ are parallel to $[p_1,q_1], [q_2,p_6]$, respectively. In the construction we assume that $t$ is a sufficiently large positive value. 
We define the hexagon $H= [p_1,q_1,q_2,p_6,q_3,q_4]$ and the octagon $K_1 = [p_1,p_2,\ldots,p_6,q_3,q_4]$. Note that $H \subset K_1$, and set $G = \bd(K_1) \setminus \bd(H)$, and $G'=\bd(K_1) \cap \bd(H)$. In the following, $D_1$ denotes the Euclidean diameter of $K_1$.

\begin{figure}[ht]
\begin{center}
 \includegraphics[width=0.4\textwidth]{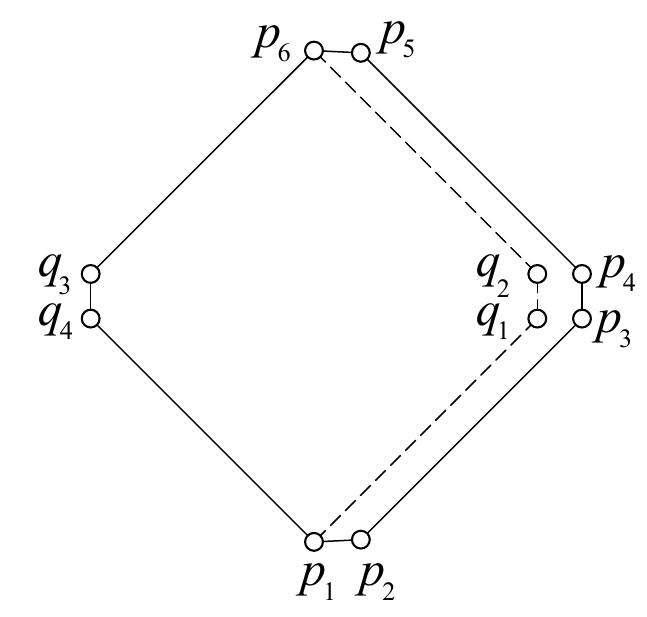}
 \caption{The hexagon $H$ and the octagon $K_1$. In the illustration, $t=10$.}
\label{fig:counter_1}
\end{center}
\end{figure}

We define $C_1$ as an $o$-symmetric convex $14$-gon with vertices $x_1,x_2,\ldots,x_{14}$ in counterclockwise order such that
\begin{enumerate}
\item[(a)] $x_1$ and $x_8$ are on the negative and the positive half of the $y$-axis, respectively;
\item[(b)] $C_1$ is symmetric to both coordinate axes;
\item[(c)] the sides $[x_1,x_2]$, $[x_2,x_3]$, $[x_3,x_4]$, $[x_4,x_5]$ are parallel to $[p_1,p_2]$, $[p_1,p_3]$, $[p_2,p_3]$ and $[p_3,p_4]$, respectively;
\item[(d)] we have $||x_2-x_1||, ||x_3-x_2||, ||x_4-x_3|| > D_1$, and $||x_5-x_4||=2$, i.e. $[x_4,x_5]$ is a translate of $[p_3,p_4]$.
\end{enumerate}

Note that by our conditions, for any two point $u,v \in G$, each of the two $C_1$-arcs in the boundary of $[u,v]_{C_1}$ consists of translates of subsets of at most two consecutive sides of $C_1$, or they contain translates of $[x_4,x_5]$ and possibly translates of subsets of the sides $[x_3,x_4]$ and $[x_5,x_6]$.
In particular, $[p_1,p_6]_{C_1} = H$ directly follows from Lemma~\ref{lem:Cgons} and its geometric interpretation mentioned earlier.

We estimate $\area([p_1,q,p_6]_{C_1})$ for any $q \in G$ with nonnegative $y$-coordinate. In the following $\bar{p}=(0,t+2)$ denotes the midpoint of $[p_3,p_4]$.
Our general procedure to find $[p_1,q,p_6]_{C_1}$ for some point $q$ will be based on the following observation. By Lemma~\ref{lem:Cgons}, each pair of points is connected by a translate of a $C_1$-arc. For $p_1,p_6$, this translate is clearly $G'$. To find the $C_1$-arc whose translate connects $p_1,q$, we find the two chords of $C_1$ that are translates of $[p_1,q]$; the position of these chords is determined by the direction of the segment $[p_1,q]$. The chords correspond to two $C_1$-arcs shorter than half of $\bd (C_1)$, and we choose the one containing points with positive $x$-coordinates. We repeat the same procedure for $p_6$ and $q$.

\emph{Case 1}: $q \in [\bar{p},p_4]$. Then $\bd([p_1,q,p_6]_{C_1})$ consists of $G'$, parts of the segments $[p_1,p_3]$ and $[p_4,p_6]$, and two segments with $q$ as an endpoint, parallel to $[p_2,p_3]$ and $[p_4,p_5]$, respectively.

\begin{Claim}\label{cl:whythis}
The quantity $\area([p_1,q,p_6]_{C_1})$ is maximal if $q=\bar{p}$.
\end{Claim}
\begin{proof}
 Let $[q,u_1]$ and $[q,u_2]$ denote the two sides of $\bd([p_1,q,p_6]_{C_1})$ with $q$ as a vertex. We choose the indices in such a way that $u_1 \in [p_1,p_3]$ and $u_2 \in [p_6,p_4]$. Note that since $q \in [\bar{p},p_4]$, the Euclidean length of $[u_1,q]$ is not smaller than that of $[u_2,q]$. Thus, moving $q$ slightly towards $p_3$ (or $p_4$) increases (or decreases) $\area([p_1,q,p_6]_{C_1})$.
\end{proof}
Now, by Claim~\ref{cl:whythis},
\[
\area([p_1,q,p_6]_{C_1}) \leq \area([p_1,\bar{p},p_6]_{C_1}) = \area(H)+\frac{3}{2}t + 3 < \area(H) + 2t-0.1 = \frac{1}{2} \left( \area(H)+\area(K_1) \right) - 2,
\]
if $t$ is sufficiently large.

\noindent
\emph{Case 2}: $q \in [p_4,p_5]$. Assume that the $x$-coordinate of $q$ is at least $t+1$. Then the curve $\bd([p_1,q,p_6]_{C_1})$ consists of $G'$, a segment $[p_1,v_1]$ containing $[p_1,q_1]$, a segment $[v_1,q]$ parallel to $[p_3,p_4]$, a segment $[q,v_2]$ parallel to $[p_4,p_6]$, and a subset $[v_2,p_6]$ of $[p_5,p_6]$. Observe that as $t \to \infty$, $\frac{||v_2-q||}{||v_1-q||} \to \infty$, implying that if $t$ is sufficiently large, then moving $q$ towards $p_5$ increases $\area([p_1,q,p_6]_{C_1})$. Thus, in this case the area is maximal if the $x$-coordinate of $q$ is equal to $t+1$.

Now, consider the case that the $x$-coordinate of $q$ is at most $t+1$. Then $\bd([p_1,q,p_6]_{C_1})$ consists of $G'$, the curve $[p_1,v_1'] \cup [v_1',v_2'] \cup [v_2',q]$, where $v_1'=(t+1,0)$ and $v_2'=(t+1,2)$, a segment $[q,v_3']$ parallel to $[p_4,p_6]$, and a subset $[v_3',p_6]$ of $[p_5,p_6]$. From this description, it readily follows that $\area([p_1,q,p_6]_{C_1})$ is maximal if $q=p_5$. Thus, in Case 2 we have
\[                                                                                                                  
\begin{aligned}
\area([p_1,q,p_6]_{C_1}) \leq \area([p_1,p_5,p_6]_{C_1}) = \\
= \area(H)+ \area([q_2,p_4,p_5,p_6)=\frac{1}{2} \left( \area(H)+\area(K_1) \right) - 2
\end{aligned}
\]

\emph{Case 3}: $q \in [p_5,p_6]$. Then $\bd([p_1,q,p_6]_{C_1})$ consists of $G'$, a segment parallel to $[q_2,p_6]$ and ending at $q$, a segment containing $[p_1,q_1]$ as a subset, and a translate of $[p_3,p_4]$. Thus, in this case $\area([p_1,q,p_6]_{C_1})$ is maximal if $q=p_5$, and we have
\[
\area([p_1,q,p_6]_{C_1}) \leq \area([p_1,p_5,p_6]_{C_1}) = \frac{1}{2} \left( \area(H)+\area(K_1) \right) - 2.
\]

Combining our results, if $t$ is sufficiently large, for any $q,q' \in G$
\begin{multline}\label{eq:important}
\area([p_1,q,p_6]_{C_1}) + \area([p_1,q',p_6]_{C_1}) \leq \area(H)+\area(K_1) - 4 <\\
< \area(H)+\area(K_1) = \area([p_1,p_6]_{C_1})+\area([p_1,p_2,p_5,p_6]_{C_1}),
\end{multline}
where we used the observation that $[p_1,p_2,p_5,p_6]_{C_1} = K_1$. To see this observation, we note that the $C_1$-arc connecting any consecutive pair of points in the (cyclic) sequence $p_1,p_2,p_5,p_6$ is contained in $\bd (K_1)$.

In the remaining part of the construction, we fix $t$ in such a way that (\ref{eq:important}) is satisfied. Before proceeding to Step 2, we emphasize that to guarantee the inequality $\area([p_1,p_6]_{C_1})+\area(\conv_{C_1} \{ p_1,p_2,p_5,p_6 \}) > 2 \area(\conv_{C_1}\{p_1, p_6, x \})$, we had to consider only chords of $C_1$ connecting two points $\bd(C_1)$ with nonnegative coordinates. This yields that if we replace $C_1$ with an $o$-symmetric convex disk whose boundary contains a translate of the part of $\bd(C_1)$ with nonnegative $x$-coordinates, then the above inequality remains true.

\textbf{Step 2}.\\
In the next step, based on Step 1, for any $n \geq 4$, we construct some $C_n \in \KK_o$ and a $C_n$-convex disk $K_n$ such that 
\begin{equation}\label{eq:step2}
\hat{a}_{n-1}^{C_n}(K_n) + \hat{a}_{n+1}^{C_n}(K_n) > 2 \hat{a}_{n}^{C_n}(K_n).
\end{equation}

For easier understanding, first we present the construction for $n=4$.
Let $p_7 = (-s,0)$, where $s$ is sufficiently large, and set $K_4 = \conv (K_1 \cup \{ p_7 \})$ (see Figure~\ref{fig:counter_2}). Let $D_4$ denote the Euclidean diameter of $K_4$, and let $C^+_1$ (resp. $C^-_1$) denote the set of the points of $\bd(C_1)$ with nonnegative (resp. nonpositive) $x$-coordinates.
We define $C_4$ as follows:
\begin{enumerate}
\item[(a)] $C_4$ is symmetric to both coordinate axes.
\item[(b)] $\bd (C_4)$ contains some translates $u+ C^+_1$ and $-u+C^-_1$, where $u$ points in the direction of the positive half of the $x$-axis. We set $w_3=u+x_1$.
\item[(c)] In addition to the above two translates, $\bd(C_4)$ consists of segments $[w_1,w_2]$, $[w_2,w_3]$ and their reflections about one or both of the coordinate axes, such that $[w_1,w_2]$, $[w_2,w_3]$ are parallel to $[p_6,p_7]$ and $[p_5,p_7]$, respectively, and $||w_1-w_2||, ||w_2-w_3|| > D_4$.
\end{enumerate}
We remark that if $s$ is sufficiently large, in particular if the slope of $[p_6,p_7]$ is smaller than that of $[p_1,p_2]$, then there is some $C_4 \in \KK_o$ satisfying the above conditions. Since, among other things, the inequality $||w_1-w_2|| > D_4$ and the properties of the construction in Step 1 imply that $K_4$ is an intersection of some translates of $C_4$, we have that $K_4$ is $C_4$-convex.

\begin{figure}[ht]
\begin{center}
 \includegraphics[width=0.7\textwidth]{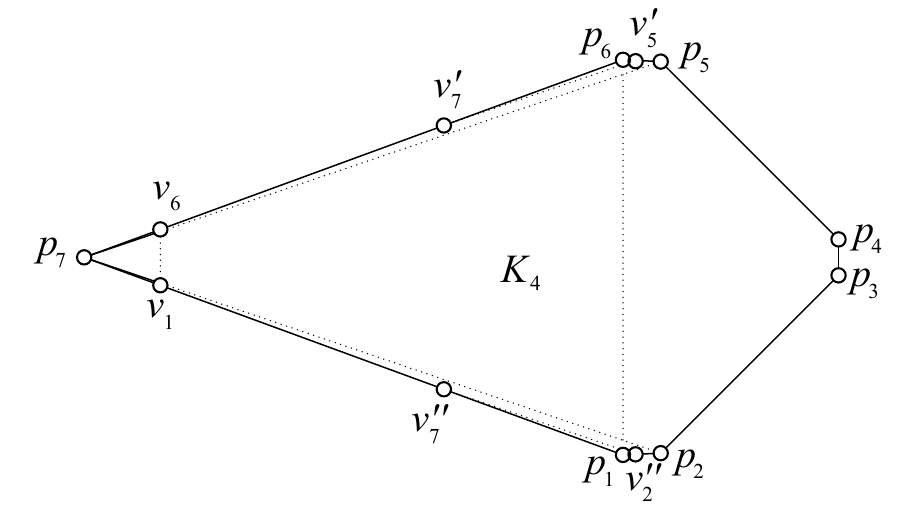}
 \caption{The $C_4$-convex disk $K_4$ with $t=10$ and $s=20$.}
\label{fig:counter_2}
\end{center}
\end{figure}

In the following, let $Q_4(s) = [z_1,z_2,z_3,z_4]_{C_4}$ denote a maximal area $C_4$-$4$-gon inscribed in $K_4$, for a fixed $s$.
Let $H'=\conv (H \cup \{ p_7 \}) = [p_1,p_6,p_7]_{C_4}$ and observe that $K_4 = [p_1,p_2,p_5,p_6,p_7]_{C_4}$. Then, to show the inequality in (\ref{eq:step2}), it is sufficient to show that $\area(H')+\area(K_4) > 2 \area(Q_4(s))$ for some value of $s$. 
Let $Q = [p_1,p_5,p_6,p_7]_{C_4}$. By the consideration in Step 1, we have that $\area(Q) = \frac{1}{2} (\area(H')+\area(K_4))-2$. Thus, we have $\area(Q_4(s)) \geq \frac{1}{2} (\area(H')+\area(K_4))-2$ for every $s$.

Let us define the points $v_1$ and $v_6$ as the images of $p_1$ and $p_6$, respectively, under the homothety with center $p_7$ and homothety ratio $\frac{1}{\sqrt[3]{s}}$. An elementary computation shows that then $v_1 = \left( -\left(1-\frac{1}{\sqrt[3]{s}} \right)s,  -\frac{1+t}{\sqrt[3]{s}}\right) \in [p_1,p_7]$ and  $v_6 = \left( -\left(1-\frac{1}{\sqrt[3]{s}} \right)s,  \frac{1+t}{\sqrt[3]{s}}\right) \in [p_6,p_7]$. Note that since $||v_6-v_1|| = \frac{2(1+t)}{\sqrt[3]{s}} < 2$ if $s$ is sufficiently large, and $\bd(C_4)$ contains two vertical segments of length $2$, we may assume that $[v_1,v_6]_{C_4} = [v_1,v_6]$. This yields that there is a translate $a+C$ of $C$ that contains $K_4 \setminus [v_1,p_7,v_6]$ and does not overlap $[v_1,p_7,v_6]$. Thus, if $[v_1,p_7,v_6]$ contains none of the $z_i$, then $Q_4(s) \subseteq (a+C) \cap K_4$ implies that $Q_4(s) \subseteq K_4 \setminus [v_1,p_7,v_6]$. From this it follows that in this case $\area(Q_4(s)) \leq \area(K_4) - \area([v_1,p_7,v_6]) = \area(K_4) - 2 \sqrt[3]{s}(1+t) < \frac{1}{2} (\area(H')+\area(K_4))-2$; a contradiction. Consequently, in the following we may assume that one of the $z_i$, say $z_4$, is a point of $T_7(s)=[v_1,p_7,v_6]$.

Let $v'_5$ and $v'_7$ be the images of $p_5$ and $p_7$, respectively, under the homothety with center $p_6$ and ratio $\frac{1}{\sqrt[3]{s}}$. Note that since the side $[w_2,w_3]$ of $C_4$ is parallel to $[v'_5,v'_7]$ and $||w_2-w_3|| > D_4 \geq ||p_5-p_7|| > ||v'_5 - v'_7||$, we have $[v_5',v_7']_{C_4}= [v_5',v_7']$, and, as in the previous paragraph, if no $z_i$ is contained in $[v_5',v_7',p_6]$, then $\area(Q_4(s)) \leq \area(K_4) - \area([v_5',v_7',p_6])$. To compute $\area([v_5',v_7',p_6])$, we note that $\area([p_5,p_7,p_6])$ is equal to one half of the absolute value of the determinant with $p_7-p_6, p_5-p_6$ as columns. Thus an elementary computation shows that
\[
\area([p_5,p_7,p_6]) = \frac{1}{2} \left| \begin{array}{cc} -s & 2.1 \\ -1-t & -0.1 \end{array}\right| = \frac{1}{2}(2.1+2.1t+0.1s).
\]
This yields that $\area([v_5',v_7',p_6]) = \frac{\area([p_5,p_7,p_6])}{\sqrt[3]{s^2}} > 0.05 \sqrt[3]{s}$, implying that if no $z_i$ is contained in $[v_5',v_7',p_6]$ and $s$ is sufficiently large, then $\area(Q_4) < \area(Q)$. Hence, we may assume that some $z_i$, say $z_3$, is an element of $T_6(s)=[v_5',p_6,v_7']$.

We obtain similarly that if $s$ is sufficiently large, some $z_i$, say $z_1$, is contained in the triangle $T_1(s)=[v_7'',p_1,v_2'']$, where $v_7''$ and $v_2''$ are the images of $p_7$ and $p_2$, respectively, under the homothety with center $p_1$ and ratio $\frac{1}{\sqrt[3]{s}}$.
Now, we estimate $\area(Q_4(s))$ as $s \to \infty$. Clearly,$\area(Q_4(s)) \leq \area([p_6,p_7,p_1]) + \area(Q_4(s) \cap K_+)$, where $K_+=\cl (K_4 \setminus [p_1,p_7,p_6]) = [p_1,p_2,\ldots,p_6]$. On the other hand, as $s \to \infty$, we have $z_3 \notin K_+$ or $z_3 \to p_6$, and similarly, $z_1 \notin K_+$ or $z_1 \to p_1$. Thus, by the argument in Step 1, for any $0 < \varepsilon < 4$, the inequality
\[
2\area(Q_4(s)) \leq 2 \area(Q)+\varepsilon = \area(H')+\area(K_4)-4+ \varepsilon < \area(H')+\area(K_4)
\]
holds for every sufficiently large value of $s$.

Next, we show how to modify the previous construction for arbitrary $n > 4$.
Consider an arbitrary convex $(n-1)$-gon $P'=[p_6,p_7', \ldots,p_{n+3}',p_1]$, with $[p_6,p_7'], \ldots, [p_{n+3}',p_1]$ as sides, that is separated from $p_2,p_5$ by the line through $[p_1,p_6]$, and its angles at $p_1$ and $p_6$ are acute. Let $h$ denote an orthogonal axial affinity, with the line through $[p_1,p_6]$ as axis, such that its ratio is a sufficiently large positive number. Note that since the $x$-coordinates of $p_1, p_6$ are $0$, this affinity is defined by $h((x,y))=(s x, y))$ for some sufficiently large ratio $s > 0$. Set $P=h(P')$ and for any integer $7 \leq i \leq n+3$, $p_i=h(p_i')$. Then we define $K_n$ as $K_n = [p_1,p_2,\ldots,p_{n+3}]$. We denote by $p_k$ the leftmost vertex of $K_n$, i.e. the vertex with smallest $x$-coordinate, and assume that the two neighbors of $p_k$ have different $x$-coordinates. We denote the (Euclidean) diameter of $K_n$ by $D_n$.

We define $C_n$ as follows:
\begin{enumerate}
\item[(a)] $\bd (C_n)$ contains some translates $u+ C^+_1$ and $-u+C^-_1$, where $u$ points in the direction of the positive half of the $x$-axis.
\item[(b)] In addition to the above two translates, $\bd(C_n)$ consists of $(2n-3)$ segments parallel to one of $[p_6,p_7],[p_7,p_8],\ldots,[p_{n+3},p_1]$ or to one of $[p_5,p_7], [p_6,p_8], \ldots, [p_{n+3}, p_2]$, where the length of each of these segments is greater than $D_n$.
\end{enumerate}

As for $n=4$, if $s$ is sufficiently large (i.e. the slope of $[p_6,p_7]$ is smaller than that of $[p_1,p_2]$, and the slope of $[p_{n+3},p_1]$ is smaller than that of $[p_5,p_6]$), then there is some $C_n \in \K_o$ satisfying the above conditions, and the property in (b) implies that $K_n$ is an intersection of translates of $C_n$, that is, $K_n$ is $C_n$-convex.

Let us define $H'=\conv (H \cup \{ p_7, p_8, \ldots, p_{n+3} \}) = [p_1,p_6,p_7,\ldots,p_{n+3}]_{C_n}$ and observe that $K_n = [p_1,p_2,p_5,p_6,\ldots,p_{n+3}]_{C_n}$. This implies that $H'$ is a $C_n$-$(n-1)$-gon and $K_n$ is a $C_n$-$(n+1)$-gon. Now, let $Q_n(s)$ denote a maximal area $C_n$-$n$-gon contained in $K_n$. For simplicity, we set $p_{n+4}=p_1$ and $p_{n+5}=p_2$, and for any integer $6 \leq i \leq n+4$ , we denote by $T_i$ the homothetic image of the triangle $[p_{i-1},p_i,p_{i+1}]$ under the homothety with $p_i$ as center and with ratio $\frac{1}{\sqrt[3]{s}}$. Then, similarly like in the case $n=4$ and using the fact that $C_n$ has a side parallel to $[p_{i-1},p_{i+1}]$ longer than $D_n$, one can show that $T_i$ contains a vertex of $Q_n(s)$ if $s$ is sufficiently large. Thus, as in the previous consideration, we have that for sufficiently large values of $s$,
\[
2\area(Q_n(s)) < \area(H')+\area(K_4),
\]
implying the inequality in (\ref{eq:step2}).

Let $C^+_n$ and $C^-_n$, we denote the parts of $\bd(C_n)$ contained in the closed half planes $\{ x \geq 0\}$ and $\{ x \leq 0\}$, respectively.
Before proceeding to the final step, we note that to guarantee the required inequality in (\ref{eq:step2}) we only used the properties of the arcs of $C_n$ \emph{entirely} contained in $C^+_n$ or $C^-_n$. Thus, if $C_n'$ is any $o$-symmetric plane convex body containing $C^+_n$ and $C^-_n$ in its boundary, then we have
$\hat{a}_{n-1}^{C_n'}(K_n) + \hat{a}_{n+1}^{C_n'}(K_n) > 2 \hat{a}_{n}^{C_n'}(K_n)$.
We formulate this observation in Claim~\ref{cl:count1_core}.

\begin{Claim}\label{cl:count1_core}
For any $n \geq 4$, there is some $C_n \in \KK_o$ and a $C_n$-convex disk $K_n$ such that if any $C_n' \in \KK_o$ contains $C_n^+$ and $C_n^-$ in its boundary, where by $C^+_n$ and $C^-_n$, we denote the parts of $\bd(C_n)$ contained in the closed half planes $\{ x \geq 0\}$ and $\{ x \leq 0\}$, respectively, then $K_n$ is $C_n'$-convex, and
\[
\hat{a}_{n-1}^{C_n'}(K_n) + \hat{a}_{n+1}^{C_n'}(K_n) > 2 \hat{a}_{n}^{C_n'}(K_n).
\]
\end{Claim}

\noindent
\textbf{Step 3}.\\
Now we prove Theorem~\ref{thm:counter1}. Let $n \geq 4$. Recall that $\KK^n_a$ denotes the elements $C$ of $\KK_o$ such that for any $C$-convex disk $K$, we have $\hat{a}_{n-1}^{C}(K) + \hat{a}_{n+1}^{C}(K) \leq 2 \hat{a}_{n}^{C}(K)$, and set $\overline{\KK}^n_a = \KK_o \setminus \KK^n_a$.
Observe that by Lemma~\ref{lem:closed}, $\overline{\KK}^n_a$ is open. We show that it is everywhere dense in $\KK_o$.

Let $C$ be an arbitrary element of $\KK_o$ and let $\varepsilon > 0$. Note that for any nondegenerate linear transformation $h : \Re^2 \to \Re^2$, $K$ is $C$-convex if and only if $h(K)$ is $h(C)$-convex, and for any $n \geq 4$, if $K$ is $C$-convex, then $\hat{a}_n^C(K) = \hat{a}_n^{h(C)}(h(K))$.
Thus, without loss of generality, we may assume that there are vertical supporting lines of $C$ meeting $\bd(C)$ at some points $\pm p$ of the $x$-axis. We choose our notation such that $p$ is on the positive half of the axis.

Consider the convex disk $C_n \in \KK_0$ in Claim~\ref{cl:count1_core}. Let us define the nondegenerate linear transformation $h_{\lambda, \mu} : \Re^2 \to \Re^2$ by $h_{\lambda,\mu}(x,y)=(\lambda x, \mu y)$. Then, if we choose suitable sufficiently small values $\mu, \lambda > 0$, then there is a translate $C^+$ of $h_{\lambda,\mu}(C^+_n)$, and an $o$-symmetric convex disk $C'$ containing $C^+$ in its boundary such that $C^+ \subset (C+ \varepsilon B^2) \setminus C$, and $C \subset C'$. Then $C' \cap (C+ \varepsilon B^2) \in \KK_o$ contains translates of $h_{\lambda,\mu}(C^+_n)$ and $h_{\lambda,\mu}(C^-_n)$ in its boundary, the Hausdorff distance of $C$ and $C'$ is at most $\varepsilon$, and, if we set $K'=h_{\lambda,\mu}(K_n)$, by Claim~\ref{cl:count1_core} we have
\[
\hat{a}_{n-1}^{C'}(K') + \hat{a}_{n+1}^{C'}(K') > 2 \hat{a}_{n}^{C'}(K').
\]
Thus, $\overline{\KK}^n_a$ is everywhere dense, which immediately yields that $\bigcap_{n=4}^{\infty} \overline{\KK}^n_a$ is residual, implying Theorem~\ref{thm:counter1}. 

\section{Open questions}\label{sec:remarks}

In the remaining part of the paper, we denote the set $[1,\infty) \cup \{ \infty \}$ by $[1,\infty]$.

Let $p,q \in [1,\infty]$ satisfy the equation $\frac{1}{p} + \frac{1}{q} = 1$. For any $K, L \in \KK$, G. Fejes T\'oth \cite{GFT} introduced
the \emph{weighted area deviation} of $K,L$ with weights $p,q$ as the quantity $\area^{p,q}(K,L)=p \area(K \setminus L) + q \area(L \setminus K)$.
He proved that if for any $K \in \KK$, $\bar{a}_K^C(n,p,q)$ denotes the minimal weighted area deviation of $K$ and an arbitrary convex $n$-gon, then the sequence $\{ \bar{a}_K^C(n,p,q) \}$ is convex. Based on this idea, we introduce the following quantity.

Let $p,q \in [1,\infty]$ satisfy the equation $\frac{1}{p} + \frac{1}{q} = 1$, and let $C \in \KK_0$, $K,L \in \KK$.
We call the quantity
\[
\begin{aligned}
\perim_C^{p,q}(K,L) = p \left( \arclength_C(\bd(K) \setminus \inter(L))- \arclength_C(\bd(L) \cap K) \right) + \\
+ q  \left( \arclength_C(\bd(L) \setminus \inter(K)) - \arclength_C (\bd(K) \cap L) \right)
\end{aligned}
\]
the \emph{weighted $C$-perimeter deviation} of $K,L$ with weights $p,q$. Note that by convexity,
$\arclength_C(\bd(K) \setminus \inter(L)) \geq \arclength_C(\bd(L) \cap K)$ and $\arclength_C(\bd(L) \setminus \inter(K)) \geq \arclength_C (\bd(K) \cap L)$, with equality if and only if $K \subseteq L$ and $L \subseteq K$, respectively. Let $\bar{p}_K^C(n,p,q)$ denote the minimal $C$-perimeter deviation of $K$ and an arbitrary convex $n$-gon. We remark that if $K$ is $C$-convex, by replacing the convex $n$-gons in the definitions of $\bar{a}_K^C(n,p,q)$ and $\bar{p}_K^C(n,p,q)$ with $C$-$n$-gons, we may analogously define the quantities $\hat{a}_K^C(n,p,q)$ and $\hat{p}_K^C(n,p,q)$, respectively.
This leads to the following problems.

\begin{Problem}
Prove or disprove that for any $p,q \in [1,\infty ]$ with $\frac{1}{p} + \frac{1}{q} = 1$, $C \in \KK_o$ and $K \in \KK$, the sequence $\{ \bar{p}_K^C(n,p,q) \}$ is convex.
\end{Problem}                                                                                                            

\begin{Problem}
Prove or disprove that for any $p,q \in [1,\infty ]$ with $\frac{1}{p} + \frac{1}{q} = 1$, $C \in \KK_o$ and $C$-convex disk $K \in \KK$, the sequence $\{ \hat{p}_K^C(n,p,q) \}$ is convex. Does the same hold for $\{ \hat{a}_K^C(n,p,q) \}$ if $C$ is the Euclidean unit disk?
\end{Problem}

Before our last problem, we remark that $\hat{a}_K^C(n,1, \infty) = \area(K) - \hat{a}_K^C(n)$ and $\hat{a}_K^C(n,\infty,1) = \hat{A}_K^C(n)-\area(K)$.

\begin{Problem}
Is there a value $p_0 \in (1,\infty)$ such that for any $p$ with $p_0 < p \leq \infty$ and $q$ satisfying $\frac{1}{p} + \frac{1}{q} = 1$, for any  $C \in \KK_o$ and $C$-convex disk $K \in \KK$, the sequence $\{ \hat{a}_K^C(n,p,q) \}$ is convex?
\end{Problem}

Another direction of generalizing Dowker's theorems is to find the extremal values of geometric quantities of polygons circumscribed about /inscribed in a plane convex body which, in some sense, are \emph{self-defined} by the polygons (for a similar approach, see e.g. \cite{Langi}). We propose one problem in this direction.

\begin{Problem}\label{prob:volprod}
For any $K \in \KK_o$, let
\[
\hat{A}_m(K) = \inf \{ M(Q): Q \in \KK_o \hbox{ is a } (2m)-\hbox{gon circumscribed about } K \}, \hbox{ and}
\]
\[
\hat{a}_m(K) = \inf \{ M(Q): Q \in \KK_o \hbox{ is a } (2m)-\hbox{gon inscribed in } K \},
\]
where $M(Q)$ denotes the \emph{Mahler volume} or \emph{volume product} of $Q$, defined as the product of the areas of $Q$ and its polar. Find the convex disks $K$ for which the sequence $\{ \hat{A}_n(K) \}$ is convex, and for which $\{ \hat{a}_n(K) \}$ is concave.
\end{Problem}

Recall that volume product attains its maximum value $8$ over the whole family $\KK_o$ at $o$-symmetric parallelograms \cite{Mahler}. Thus, the suprema of the volume products of inscribed or circumscribed $o$-symmetric polygons are attained by inscribed/circumscribed parallelograms.

We note that the volume product of an $o$-symmetric convex body $C$ is closely related to the notion of \emph{Holmes-Thompson volume} of the normed space generated by $C$. For more information on it and its relation to volume product, the reader is referred to \cite{PaivaThompson}.

\section*{Acknowledgments}

The authors thank the referees for their helpful suggestions that significantly improved the quality of this paper, and one of the referees to bring the authors' attention to Dowker-type theorems for self-defined quantities.

\end{document}